\newtheorem{theorem}{Theorem}
\newtheorem{lemma}[theorem]{Lemma}
\newtheorem{proposition}[theorem]{Proposition}
\newtheorem{remark}[theorem]{Remark}
\newenvironment{proof}[1][Proof]{\noindent\textbf{#1.} }{\ \rule{0.5em}{0.5em}}
\begin{document}

\title{On the Order of Magnitude of Sums of Negative Powers of Integrated
Processes\thanks{%
I would like to thank Kalidas Jana for inquiring about the order of
magnitude of some of the quantities now treated in the paper. I am indebted
to Robert de Jong for comments on an early draft that have led to an
improvement in Theorem \ref{mainresult}. I am grateful to Istvan Berkes,
Hannes Leeb, David Preinerstorfer, Zhan Shi, the referees, and the editor
Peter Phillips for helpful comments.}}
\author{Benedikt M. P\"{o}tscher \\
University of Vienna}
\date{Preliminary Draft November 2010\\
First version: January 2011\\
First revision: December 2011\\
This version: January 2012}
\maketitle

\section{Introduction}

The asymptotic behavior of expressions of the form $%
\sum_{t=1}^{n}f(r_{n}x_{t})$ where $x_{t}$ is an integrated process, $r_{n}$
is a sequence of norming constants, and $f$ is a measurable function has
been the subject of a number of articles in recent years. We mention Borodin
and Ibragimov (1995), Park and Phillips (1999), de Jong (2004), Jeganathan
(2004), P\"{o}tscher (2004), de Jong and Whang (2005), Berkes and Horvath
(2006), and Christopeit (2009) which study weak convergence results for such
expressions under various conditions on $x_{t}$ and the function $f$. Of
course, these results also provide information on the order of magnitude of $%
\sum_{t=1}^{n}f(r_{n}x_{t})$. However, to the best of our knowledge no
result is available for the case where $f$ is non-integrable with respect to
Lebesgue-measure in a neighborhood of a given point, say $x=0$. In this
paper we are interested in bounds on the order of magnitude of $%
\sum_{t=1}^{n}\left\vert x_{t}\right\vert ^{-\alpha }$ when $\alpha \geq 1$,
a case where the implied function $f$ is not integrable in any neighborhood
of zero. More generally, we shall also obtain bounds on the order of
magnitude for $\sum_{t=1}^{n}v_{t}\left\vert x_{t}\right\vert ^{-\alpha }$
where $v_{t}$ are random variables satisfying certain conditions. While the
emphasis in this paper is on negative powers that are non-integrable in any
neighborhood of zero (i.e., $\alpha \geq 1$), we also present results for $%
\alpha <1\mathbb{\ }$ whenever they are easily obtained. We make no effort
to improve the results in case $\alpha <1$, but we shall occasionally
mention better results available in this case (or in subcases thereof)
without attempting to be complete in the coverage of such (better) results
specific to the case $\alpha <1$. While my interest in the problem treated
in the present paper is purely driven by mathematical curiosity, reciprocals
and ratios of variables that may be integrated are not alien to economic
models. Hence the results presented below are of potential interest for the
econometric analysis of such models.

\section{Results}

Consider an integrated process%
\begin{equation*}
x_{t}=x_{t-1}+w_{t}
\end{equation*}%
for integer $t\geq 1$, with the initial real-valued random variable $x_{0}$
being independent of the process $(w_{t})_{t\geq 1}$ which is assumed to be
given by%
\begin{equation*}
w_{t}=\sum_{j=0}^{\infty }\phi _{j}\varepsilon _{t-j}.
\end{equation*}%
Here $(\varepsilon _{i})_{i\in \mathbb{Z}}$ are independent and identically
distributed real-valued random variables that have mean $0$ and a finite
variance, which -- without loss of generality -- is set equal to $1$. The
coefficients $\phi _{j}$ are assumed to satisfy $\sum_{j=0}^{\infty
}\left\vert \phi _{j}\right\vert <\infty $ and $\sum_{j=0}^{\infty }\phi
_{j}\neq 0$. Furthermore, $\varepsilon _{i}$ is supposed to have a density $%
q $ with respect to (w.r.t.) Lebesgue-measure. We note that under these
assumptions $x_{t}$ possesses a density w.r.t. Lebesgue-measure for every $%
t\geq 1$, and the same is true for $w_{t}$; cf.~Section 3.1 in P\"{o}tscher
(2004). Furthermore, the characteristic function $\psi $ of $\varepsilon
_{i} $ is assumed to satisfy%
\begin{equation}
\int_{-\infty }^{\infty }\left\vert \psi (s)\right\vert ^{\nu }ds<\infty
\label{char}
\end{equation}%
for some $1\leq \nu <\infty $. \emph{These assumptions will be maintained
throughout the paper.} They have been used in P\"{o}tscher (2004), while
stricter versions occur, e.g., in Park and Phillips (1999), de Jong (2004),
and de Jong and Whang (2005). A detailed discussion of the scope of
condition (\ref{char}) is given in P\"{o}tscher (2004), Section 3.1. In
particular, we recall from Lemma 3.1 in P\"{o}tscher (2004) that under the
maintained conditions of the present paper densities $h_{t}$ of $%
t^{-1/2}x_{t}$ exist such that for a suitable integer $t_{\ast }\geq 1$%
\begin{equation}
\sup_{t\geq t_{\ast }}\left\Vert h_{t}\right\Vert _{\infty }<\infty
\label{densitybound}
\end{equation}%
is satisfied, where $\left\Vert \cdot \right\Vert _{\infty }$ denotes the
supremum norm. In the following we set $\kappa =\sup_{t\geq t_{\ast
}}\left\Vert h_{t}\right\Vert _{\infty }$.

\subsection{Bounds on the Order of Magnitude of $\sum_{t=1}^{n}\left\vert
x_{t}\right\vert ^{-\protect\alpha }$}

We first consider the behavior of $\sum_{t=1}^{n}\left\vert x_{t}\right\vert
^{-\alpha }$. Note that under our assumptions this quantity is almost surely
well-defined and finite for every $\alpha \in \mathbb{R}$.\footnote{%
In particular, how, and if, we assign a value in the extended real line to $%
\left\vert x_{t}\right\vert ^{-\alpha }$ on the event $\left\{
x_{t}=0\right\} $ has no consequence for the results.} Recall that we are
mainly interested in the case $\alpha \geq 1$. While the next theorem
provides an upper bound on the order of magnitude, lower bounds are
discussed in Remarks \ref{rem2} and \ref{rem8} below.

\begin{theorem}
\label{mainresult}%
\begin{equation*}
\sum_{t=1}^{n}\left\vert x_{t}\right\vert ^{-\alpha }=\left\{ 
\begin{array}{cc}
O_{\Pr }(n^{\alpha /2}) & \text{if \ \ }\alpha >1 \\ 
O_{\Pr }(n^{1/2}\log n) & \text{if \ \ }\alpha =1 \\ 
O_{\Pr }(n^{1-\alpha /2}) & \text{if \ \ }-2\leq \alpha <1.%
\end{array}%
\right.
\end{equation*}
\end{theorem}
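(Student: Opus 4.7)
The strategy splits into three cases according to whether $|y|^{-\alpha}$ is integrable near the origin. Across all cases, the main tool is the density bound: writing $f_t$ for the density of $x_t$, relation (\ref{densitybound}) and rescaling give $\|f_t\|_\infty \leq \kappa t^{-1/2}$ for $t \geq t_{\ast}$.

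For $0 \leq \alpha < 1$ I would compute $E|x_t|^{-\alpha}$ directly: the substitution $u = t^{-1/2}y$ and a split at $|u| = 1$ give
\[
E|x_t|^{-\alpha} = t^{-\alpha/2} \int |u|^{-\alpha} h_t(u)\,du = O(t^{-\alpha/2}),
\]
so summation and Markov's inequality yield $O_{\Pr}(n^{1-\alpha/2})$. For $-2 \leq \alpha < 0$ the function $|x_t|^{|\alpha|}$ has no singularity; I would use $|x_t| \leq |x_0| + |\sum_{s=1}^t w_s|$ together with Jensen's inequality (valid since $|\alpha| \leq 2$ and $w_s$ has finite variance) to obtain $E|\sum_{s=1}^t w_s|^{|\alpha|} = O(t^{|\alpha|/2})$, whose summation delivers $O(n^{1-\alpha/2})$; the $x_0$-contribution survives as an $O_{\Pr}(n)$ term that is dominated in this range.

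The non-trivial case is $\alpha \geq 1$, where $E|x_t|^{-\alpha} = \infty$ and a direct moment bound fails. My plan is a truncation/balancing argument. Fix a scale $\delta_n > 0$ and consider the event $A_n = \{\min_{t \leq n} |x_t| \geq \delta_n\}$; on $A_n$ one has $\sum_t |x_t|^{-\alpha} = \sum_t |x_t|^{-\alpha}\mathbf{1}(|x_t| \geq \delta_n)$, which I would bound in expectation by carefully splitting the integral $\int_{|y|\geq\delta_n}|y|^{-\alpha}f_t(y)\,dy$ at $|y| = 1$, and, for $\alpha = 1$, additionally at $|y| = \sqrt{t}$ so that the tail contributes $O(t^{-1/2}\log t)$ rather than $O(1)$ and does not explode after summing over $t$. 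This yields $O(\sqrt{n}\,\delta_n^{1-\alpha})$ for $\alpha > 1$ and $O(\sqrt{n}\log(n/\delta_n^2))$ for $\alpha = 1$. In parallel, the density bound gives $P(A_n^c) \leq \sum_t P(|x_t| < \delta_n) = O(\delta_n \sqrt{n})$. Choosing $\delta_n = \varepsilon n^{-1/2}$ makes $P(A_n^c) = O(\varepsilon)$ uniformly in $n$ while leaving the expectation of the truncated sum of order $n^{\alpha/2}$ (respectively $n^{1/2}\log n$). The standard $O_{\Pr}$-argument then closes the proof: given $\eta > 0$, first pick $\varepsilon$ small so that $\limsup_n P(A_n^c) < \eta/2$, and then pick the multiplicative constant large so that Markov on the truncated sum controls the remaining probability. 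The decisive steps — and the places where I expect the real work to lie — are the identification of the correct scale $\delta_n \asymp n^{-1/2}$ and the tight handling of the density integral that converts the summation $\sum t^{-1/2}$ into $\sqrt n$ rather than the useless $n$.
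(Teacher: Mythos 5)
Your overall strategy is the same as the paper's: truncate at the scale $\delta_n=\varepsilon n^{-1/2}$, bound $\Pr(\min_{t\le n}|x_t|<\delta_n)$ by $O(\varepsilon)$ via the density bound $\|f_t\|_\infty\le\kappa t^{-1/2}$, bound the expectation of the truncated sum with the same density bound, and close with Markov, choosing $\varepsilon$ first and the multiplicative constant second. (The paper writes the threshold as $|t^{-1/2}x_t|\le\delta/(nt)^{1/2}$, which is exactly $|x_t|\le\delta n^{-1/2}$, and it treats $0\le\alpha<1$ inside the same scheme; your direct moment bound for that range and your treatment of $-2\le\alpha<0$ via $|x_t|^{|\alpha|}\lesssim |x_0|^{|\alpha|}+|x_t-x_0|^{|\alpha|}$ and Lyapunov also coincide with the paper, modulo the remark that $E(x_t-x_0)^2=O(t)$ comes from the linear-process structure, not merely from $w_s$ having finite variance.)

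There is, however, one step that fails as you state it. For $\alpha>1$ you split the integral $\int_{|y|\ge\delta_n}|y|^{-\alpha}f_t(y)\,dy$ only at $|y|=1$ and claim the truncated sum has expectation $O(\sqrt n\,\delta_n^{1-\alpha})$. The inner piece indeed gives $\kappa t^{-1/2}\delta_n^{1-\alpha}$ up to constants, but the outer piece $\int_{|y|>1}|y|^{-\alpha}f_t(y)\,dy$ is only bounded by $\Pr(|x_t|>1)\le 1$ per term, so after summing over $t\le n$ you get an additional $O(n)$, which dominates $n^{\alpha/2}$ whenever $1<\alpha<2$ and destroys the claimed rate in that range (for $\alpha\ge 2$ it happens to be harmless). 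The repair is exactly the device you reserve for $\alpha=1$: split additionally at $|y|=\sqrt t$ (equivalently, work with the rescaled variable $t^{-1/2}x_t$ and split at $1$, as the paper does). Then $\int_{1<|y|<\sqrt t}|y|^{-\alpha}f_t\,dy\le 2\kappa t^{-1/2}(\alpha-1)^{-1}$ and $\int_{|y|\ge\sqrt t}|y|^{-\alpha}f_t\,dy\le t^{-\alpha/2}$, so the tail contributes $O(\sqrt n)+O(n^{1-\alpha/2})$ after summation, both of which are $O(n^{\alpha/2})$ for $\alpha>1$. With this single modification your argument is correct and essentially identical to the paper's proof.
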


\begin{proof}
Suppose first that $\alpha \geq 0$ holds. Since $\sum_{t=1}^{t_{\ast
}-1}\left\vert x_{t}\right\vert ^{-\alpha }$ is almost surely real-valued it
suffices to prove the result for $\sum_{t=t_{\ast }}^{n}\left\vert
x_{t}\right\vert ^{-\alpha }$. For $0<\delta <1$ we have almost surely%
\begin{eqnarray*}
\sum_{t=t_{\ast }}^{n}\left\vert x_{t}\right\vert ^{-\alpha }
&=&\sum_{t=t_{\ast }}^{n}\left\vert x_{t}\right\vert ^{-\alpha }\boldsymbol{1%
}\left( \left\vert t^{-1/2}x_{t}\right\vert >\delta /(nt)^{1/2}\right) \\
&&+\sum_{t=t_{\ast }}^{n}\left\vert x_{t}\right\vert ^{-\alpha }\boldsymbol{1%
}\left( \left\vert t^{-1/2}x_{t}\right\vert \leq \delta /(nt)^{1/2}\right) \\
&=&Q_{n}(\delta )+R_{n}(\delta )
\end{eqnarray*}%
where $t_{\ast }$ is as in (\ref{densitybound}) and $n\geq t_{\ast }$. First
consider $R_{n}(\delta )$: Set 
\begin{equation*}
S_{n}(\delta )=\bigcup_{t=t_{\ast }}^{n}\left\{ \left\vert
t^{-1/2}x_{t}\right\vert \leq \delta /(nt)^{1/2}\right\} .
\end{equation*}%
Observe that $\left\{ R_{n}(\delta )>0\right\} =S_{n}(\delta )$ up to
null-sets and%
\begin{eqnarray*}
\Pr \left( R_{n}(\delta )>0\right) &=&\Pr \left( S_{n}(\delta )\right) \
\leq \ \sum_{t=t_{\ast }}^{n}\Pr \left( \left\vert t^{-1/2}x_{t}\right\vert
\leq \delta /(nt)^{1/2}\right) \\
&=&\sum_{t=t_{\ast }}^{n}\int_{-\delta /(nt)^{1/2}}^{\delta
/(nt)^{1/2}}h_{t}(z)dz\ \leq \ 2\kappa \delta n^{-1/2}\sum_{t=t_{\ast
}}^{n}t^{-1/2} \\
&\leq &4\kappa \delta
\end{eqnarray*}%
holds for all $n\geq t_{\ast }$ in view of (\ref{densitybound}) using the
fact that $\sum_{t=t_{\ast }}^{n}t^{-1/2}\leq \sum_{t=1}^{n}t^{-1/2}\leq
2n^{1/2}$. Next we bound $Q_{n}(\delta )$: Observe that%
\begin{equation*}
EQ_{n}(\delta )=\sum_{t=t_{\ast }}^{n}t^{-\alpha /2}E\left( \left\vert
t^{-1/2}x_{t}\right\vert ^{-\alpha }\boldsymbol{1}(\left\vert
t^{-1/2}x_{t}\right\vert >\delta /(nt)^{1/2})\right) ,
\end{equation*}%
and that for $t\geq t_{\ast }$%
\begin{eqnarray*}
&&E\left( \left\vert t^{-1/2}x_{t}\right\vert ^{-\alpha }\boldsymbol{1}%
(\left\vert t^{-1/2}x_{t}\right\vert >\delta /(nt)^{1/2})\right) \\
&=&E\left( \left\vert t^{-1/2}x_{t}\right\vert ^{-\alpha }\boldsymbol{1}%
(1>\left\vert t^{-1/2}x_{t}\right\vert >\delta /(nt)^{1/2})\right) \\
&&+E\left( \left\vert t^{-1/2}x_{t}\right\vert ^{-\alpha }\boldsymbol{1}%
(\left\vert t^{-1/2}x_{t}\right\vert \geq 1)\right) \\
&\leq &\int_{\delta /(nt)^{1/2}<\left\vert z\right\vert <1}\left\vert
z\right\vert ^{-\alpha }h_{t}(z)dz\ +\ 1\ \leq \ 2\kappa \int_{\delta
/(nt)^{1/2}}^{1}z^{-\alpha }dz\ +\ 1 \\
&\leq &\left\{ 
\begin{array}{cc}
1+2\kappa (\alpha -1)^{-1}\delta ^{1-\alpha }(nt)^{(\alpha -1)/2} & \text{if
\ \ }\alpha >1 \\ 
1+2\kappa \log \left( \delta ^{-1}\right) +2\kappa \log \left( \left(
nt\right) ^{1/2}\right) & \text{if \ \ }\alpha =1 \\ 
1+2\kappa (1-\alpha )^{-1} & \text{if \ \ }0\leq \alpha <1.%
\end{array}%
\right.
\end{eqnarray*}%
Consequently, for $n\geq \max (t_{\ast },3)$ we have%
\begin{eqnarray*}
E(Q_{n}(\delta )) &\leq &\left\{ 
\begin{array}{cc}
\left( 1+2\kappa (\alpha -1)^{-1}\delta ^{1-\alpha }\right) n^{\left( \alpha
-1\right) /2}\sum_{t=t_{\ast }}^{n}t^{-1/2} & \text{if \ \ }\alpha >1 \\ 
\left( 1+2\kappa +2\kappa \log \left( \delta ^{-1}\right) \right) \left(
\log n\right) \sum_{t=t_{\ast }}^{n}t^{-1/2} & \text{if \ \ }\alpha =1 \\ 
\left( 1+2\kappa (1-\alpha )^{-1}\right) \sum_{t=t_{\ast }}^{n}t^{-\alpha /2}
& \text{if \ \ }0\leq \alpha <1.%
\end{array}%
\right. \\
&\leq &\left\{ 
\begin{array}{cc}
c(\alpha ,\delta ,\kappa )n^{\alpha /2} & \text{if \ \ }\alpha >1 \\ 
c(1,\delta ,\kappa )n^{1/2}\log n & \text{if \ \ }\alpha =1 \\ 
c(\alpha ,\delta ,\kappa )n^{1-\alpha /2} & \text{if \ \ }0\leq \alpha <1.%
\end{array}%
\right.
\end{eqnarray*}%
where $c(\alpha ,\delta ,\kappa )$ are positive finite constants.

Now, for arbitrary $\varepsilon >0$ choose $\delta (\varepsilon )$
satisfying $0<\delta (\varepsilon )<\min (1,\varepsilon /(8\kappa ))$. Then
choose $M=M(\varepsilon ,\alpha ,\kappa )>0$ large enough to satisfy%
\begin{equation*}
M>4\varepsilon ^{-1}c(\alpha ,\delta (\varepsilon ),\kappa ).
\end{equation*}%
Then, with $d_{n}=n^{\alpha /2}$ in case $\alpha >1$, $d_{n}=n^{1/2}\log n$
in case $\alpha =1$, and $d_{n}=n^{1-\alpha /2}$ in case $0\leq \alpha <1$,
we obtain using Markov's inequality%
\begin{eqnarray*}
&&\Pr \left( d_{n}^{-1}\sum_{t=t_{\ast }}^{n}\left\vert x_{t}\right\vert
^{-\alpha }>M\right) \\
&\leq &\Pr \left( d_{n}^{-1}Q_{n}(\delta (\varepsilon ))>M/2\right) \ +\ \Pr
\left( d_{n}^{-1}R_{n}(\delta (\varepsilon ))>M/2\right) \\
&\leq &2d_{n}^{-1}EQ_{n}(\delta (\varepsilon ))/M\ +\ \Pr \left(
R_{n}(\delta (\varepsilon ))>0\right) \ <\ \varepsilon
\end{eqnarray*}%
for all $n\geq \max (t_{\ast },3)$. Since $\sum_{t=t_{\ast }}^{n}\left\vert
x_{t}\right\vert ^{-\alpha }$ is almost surely real-valued for all $n\geq
t_{\ast }$, this completes the proof in case $\alpha \geq 0$.

Suppose next that $-2\leq \alpha <0$ holds. Observe first that%
\begin{equation}
\sum_{t=1}^{n}\left\vert x_{t}\right\vert ^{-\alpha }\leq \max \left(
1,2^{-\alpha -1}\right) \left( \sum_{t=1}^{n}\left\vert
x_{t}-x_{0}\right\vert ^{-\alpha }+n\left\vert x_{0}\right\vert ^{-\alpha
}\right) .  \label{smpl}
\end{equation}%
By Lyapunov's inequality and noting that $E\left( x_{t}-x_{0}\right) ^{2}$
is of the exact order $t$ (since $w_{t}$ is a linear process with absolutely
summable coefficients satisfying $\sum_{j=0}^{\infty }\phi _{j}\neq 0$) we
have 
\begin{equation*}
E\sum_{t=1}^{n}\left\vert x_{t}-x_{0}\right\vert ^{-\alpha }\leq
c\sum_{t=1}^{n}t^{-\alpha /2}=O(n^{1-\alpha /2})
\end{equation*}%
for some finite constant $c$. But then an application of Markov's inequality
gives $\sum_{t=1}^{n}\left\vert x_{t}-x_{0}\right\vert ^{-\alpha }=O_{\Pr
}(n^{1-\alpha /2})$. Together with (\ref{smpl}) this establishes the claim.
\end{proof}

\begin{remark}
\normalfont(i) The proof of Theorem \ref{mainresult} in the previous version
of this paper (dated January 2011) is incorrect. For a discussion of the
errors and an alternative proof see the supplementary notes available on my
webpage.

(ii) Remark 6 in the January 2011 version of this paper insinuated that
there is a contradiction between Theorem 1 and results in de Jong and Whang
(2005). However, the argument put forward in this remark is invalid as there
is an elementary sign-mistake in the inequality presented in that remark.
Hence, this remark is completely invalid and I owe apologies to de Jong and
Whang.
\end{remark}

\begin{remark}
\label{rem0}\normalfont(i) For values of $\alpha $ such that $x^{-\alpha }$
is well-defined for every $x$ except possibly for $x=0$, the quantity $%
\sum_{t=1}^{n}x_{t}^{-\alpha }$ is almost surely well-defined and
real-valued. By the triangle inequality Theorem \ref{mainresult} applies
also to $\sum_{t=1}^{n}x_{t}^{-\alpha }$.

(ii) Not surprisingly, the expectation of $\sum_{t=1}^{n}\left\vert
x_{t}\right\vert ^{-\alpha }$ will typically be infinite in the case $\alpha
\geq 1$ (e.g., if the density of $x_{t}$ is bounded from below in a
neighborhood of zero as is the case if $x_{t}$ is Gaussian). The expectation
can, however, also be infinite in other cases (e.g., if $\alpha <-2$ and
moments of $x_{t}$ of order $-\alpha $ do not exist).
\end{remark}

\begin{remark}
\label{negative}\normalfont(i) It follows from Remark \ref{rem2} below that
the bound given for $-2\leq \alpha <0$ holds in fact for all $\alpha <0$
provided the additional condition $\sum_{j=0}^{\infty }j^{1/2}\left\vert
\phi _{j}\right\vert <\infty $ is satisfied. [The additional condition is
perhaps unnecessary, but we do not make any effort to remove it as the focus
in this paper is on the case $\alpha \geq 1$.]

(ii) If $Ex_{0}^{2}<\infty $ holds, then $Ex_{t}^{2}=E\left(
x_{t}-x_{0}\right) ^{2}+Ex_{0}^{2}$ is of the order $t$ and thus $%
E\left\vert x_{t}\right\vert ^{-\alpha }$ is at most of the order $%
t^{-\alpha /2}$ for $-2\leq \alpha <0$ by Lyapunov's inequality. This shows
that if $Ex_{0}^{2}<\infty $ holds the proof of Theorem \ref{mainresult} for
the case $-2\leq \alpha <0$ can be simplified.
\end{remark}

\begin{remark}
\label{rem2}\normalfont Suppose the stronger summability condition $%
\sum_{j=0}^{\infty }j^{1/2}\left\vert \phi _{j}\right\vert <\infty $ is
satisfied. Under this additional assumption more is known in case $-\infty
<\alpha <1$ than just the upper bound on the order of magnitude of $%
\sum_{t=1}^{n}\left\vert x_{t}\right\vert ^{-\alpha }$ given by Theorem \ref%
{mainresult}: If $-\infty <\alpha <1$ then%
\begin{equation}
n^{\alpha /2-1}\sum_{t=1}^{n}\left\vert x_{t}\right\vert ^{-\alpha }\overset{%
d}{\rightarrow }\left\vert \sigma \right\vert ^{-\alpha
}\int_{0}^{1}\left\vert W(s)\right\vert ^{-\alpha }ds  \label{pot}
\end{equation}%
for $n\rightarrow \infty $, with the limiting variable being positive with
probability one; as a consequence, $n^{1-\alpha /2}$ is the exact order of
magnitude in probability of $\sum_{t=1}^{n}\left\vert x_{t}\right\vert
^{-\alpha }$. Here $W$ is standard Brownian motion and $\sigma
=\sum_{j=0}^{\infty }\phi _{j}$, which is non-zero by assumption.\footnote{%
Clearly, $\sigma ^{2}$ is nothing else than the so-called long-run variance.}
Relation (\ref{pot}) follows from the first claim in Corollary 3.3 in P\"{o}%
tscher (2004), applied to the function $T$ given by $T(x)=\left\vert
x\right\vert ^{-\alpha }$ for $x\neq 0$ and $T(0)=0$, and from the
observation that $n^{\alpha /2-1}\sum_{t=1}^{b}\left\vert x_{t}\right\vert
^{-\alpha }\rightarrow 0$ as $n\rightarrow \infty $ for every fixed integer $%
b$. Note that $T$ is locally integrable since $\alpha <1$ and that $T$
satisfies $T(\lambda x)=\left\vert \lambda \right\vert ^{-\alpha }T(x)$ for
all $x\in \mathbb{R}$ and all $\lambda \neq 0$. Also note that the integral
in (\ref{pot}) is almost surely well-defined and finite (independently of
how one interprets $\left\vert W(s)\right\vert ^{-\alpha }$ for $W(s)=0$ in
case $\alpha >0$), cf.~(2.4) and Remark 2.1 in P\"{o}tscher (2004). [In the
case $\alpha \leq 0$, it is well-known that (\ref{pot}) holds even under
much weaker conditions than used here, cf.~Lemma A.1 in P\"{o}tscher (2004).
Since the emphasis in this paper is on positive $\alpha $, we make no
attempt to spell out these sharper and well-known results for $\alpha \leq 0$%
.]
\end{remark}

\begin{remark}
\label{rem8}\footnote{%
The lower bound results for $\alpha \geq 1$ given in this remark together
with the lower bound results for the case $-\infty <\alpha <1$ implied by
Remark \ref{rem2} provide an improvement over Proposition 6.4 in Park and
Phillips (1999) under weaker conditions.}\normalfont(i) We first provide a
lower bound in case $\alpha =1$. Given the additional assumption $%
\sum_{j=0}^{\infty }j^{1/2}\left\vert \phi _{j}\right\vert <\infty $, a
lower bound for the order of magnitude in probability of $%
\sum_{t=1}^{n}\left\vert x_{t}\right\vert ^{-1}$ is given by $n^{1/2}$, in
the sense that%
\begin{equation*}
\lim_{n\rightarrow \infty }\Pr \left( n^{-1/2}\sum_{t=1}^{n}\left\vert
x_{t}\right\vert ^{-1}>M\right) =1
\end{equation*}%
holds for every real $M$, i.e., $n^{-1/2}\sum_{t=1}^{n}\left\vert
x_{t}\right\vert ^{-1}\rightarrow \infty $ in probability. To see this, let $%
T_{k,1}(x)=\min (k,\left\vert x\right\vert ^{-1})$ for $k\in \mathbb{N}$
with the convention that $T_{k,1}(0)=k$. Then we have almost surely%
\begin{equation*}
n^{-1/2}\sum_{t=1}^{n}\left\vert x_{t}\right\vert
^{-1}=n^{-1}\sum_{t=1}^{n}\left\vert n^{-1/2}x_{t}\right\vert ^{-1}\geq
n^{-1}\sum_{t=1}^{n}T_{k,1}(n^{-1/2}x_{t})
\end{equation*}%
for every $k\in \mathbb{N}$. Furthermore, $n^{-1}%
\sum_{t=1}^{n}T_{k,1}(n^{-1/2}x_{t})$ converges in distribution to $%
\int_{0}^{1}T_{k,1}(\sigma W(s))ds$ by Corollary 3.4 in P\"{o}tscher (2004).%
\footnote{%
Since $T_{k,1}$ is continuous, this convergence in fact holds under weaker
conditions on the process $x_{t}$ then used here, cf. Lemma A.1 in P\"{o}%
tscher (2004).} Now, by Corollary 7.4 in Chung and Williams (1990) and the
monotone convergence theorem we have almost surely%
\begin{equation*}
\int_{0}^{1}T_{k,1}(\sigma W(s))ds=\int_{-\infty }^{\infty }T_{k,1}(\sigma
x)L(1,x)dx\rightarrow \left\vert \sigma \right\vert ^{-1}\int_{-\infty
}^{\infty }\left\vert x\right\vert ^{-1}L(1,x)dx=\infty
\end{equation*}%
for $k\rightarrow \infty $, where $L$ denotes standard Brownian local time.
The last equality in the above display follows since $L(1,0)>0$ almost
surely and $L(1,x)$ having almost surely continuous sample path together
imply that there exists a neighborhood $U$ of zero (that may depend on the
realization of $L(1,\cdot )$) such that $\inf_{x\in U}L(1,x)>0$ holds almost
surely. Note that the just established lower bound (established under the
stricter summability condition on $\phi _{j}$ imposed here) and the upper
bound given by Theorem \ref{mainresult} agree up to a logarithmic term and
in this sense are close to being sharp.

(ii) We next turn to the case $\alpha >1$ and show that the upper bound $%
n^{\alpha /2}$ on the order of magnitude is also a lower bound in the sense
that 
\begin{equation}
\lim_{\varepsilon \rightarrow 0,\varepsilon >0}\liminf_{n\rightarrow \infty
}\Pr \left( n^{-\alpha /2}\sum_{t=1}^{n}\left\vert x_{t}\right\vert
^{-\alpha }>\varepsilon \right) =1  \label{low}
\end{equation}%
holds: To this end let $\beta _{n}$ be a sequence satisfying $\beta
_{n}\rightarrow \infty $ and $n^{-1}\beta _{n}\rightarrow 0$ as $%
n\rightarrow \infty $. Then we have almost surely%
\begin{eqnarray*}
\left( n^{-1}\beta _{n}\right) ^{1-\alpha }n^{-\alpha
/2}\sum_{t=1}^{n}\left\vert x_{t}\right\vert ^{-\alpha }
&=&n^{-1}\sum_{t=1}^{n}\beta _{n}\left\vert \beta
_{n}n^{-1/2}x_{t}\right\vert ^{-\alpha } \\
&\geq &n^{-1}\sum_{t=1}^{n}\beta _{n}T_{k,\alpha }(\beta _{n}n^{-1/2}x_{t}),
\end{eqnarray*}%
where $T_{k,\alpha }(x)=\min (k,\left\vert x\right\vert ^{-\alpha })$ for $%
k\in \mathbb{N}$ with the convention that $T_{k,\alpha }(0)=k$. Note that $%
T_{k,\alpha }$ is Lebesgue-integrable (since $\alpha >1$) and bounded. The
version of Theorem 3 in Jeganathan (2004) given as Proposition \ref%
{jeganathan} in the Appendix below now shows that the right-hand side of the
above display converges in distribution to 
\begin{equation*}
\left\vert \sigma \right\vert ^{-1}\int_{-\infty }^{\infty }T_{k,\alpha
}(x)dxL(1,0).
\end{equation*}%
Since $L(1,0)>0$ almost surely and $\int_{-\infty }^{\infty }T_{k,\alpha
}(x)dx\rightarrow \infty $ for $k\rightarrow \infty $, it follows that 
\begin{equation*}
\lim_{n\rightarrow \infty }\Pr \left( \left( n^{-1}\beta _{n}\right)
^{1-\alpha }n^{-\alpha /2}\sum_{t=1}^{n}\left\vert x_{t}\right\vert
^{-\alpha }>M\right) =1
\end{equation*}%
holds for every real $M$, i.e., $\left( n^{-1}\beta _{n}\right) ^{1-\alpha
}n^{-\alpha /2}\sum_{t=1}^{n}\left\vert x_{t}\right\vert ^{-\alpha
}\rightarrow \infty $ in probability. Note that $\alpha >1$ and that this
result holds for \emph{every }sequence $\beta _{n}$ satisfying $\beta
_{n}\rightarrow \infty $ and $n^{-1}\beta _{n}\rightarrow 0$. A fortiori it
then holds for every sequence $\beta _{n}>0$ satisfying $n^{-1}\beta
_{n}\rightarrow 0$. Hence we have that $\eta _{n}n^{-\alpha
/2}\sum_{t=1}^{n}\left\vert x_{t}\right\vert ^{-\alpha }\rightarrow \infty $
in probability for \emph{every }sequence $\eta _{n}\rightarrow \infty $. By
Lemma \ref{lem} in the Appendix it follows that $n^{\alpha /2}$ is a lower
bound in the sense of (\ref{low}).
\end{remark}

\begin{remark}
\normalfont(i) All results above for $\sum_{t=1}^{n}\left\vert
x_{t}\right\vert ^{-\alpha }$ apply analogously to sums of the form $%
\sum_{t=a}^{n}\left\vert x_{t}\right\vert ^{-\alpha }$ for any (fixed)
integer $a>1$. [This follows since $\sum_{t=1}^{a-1}\left\vert
x_{t}\right\vert ^{-\alpha }$ is almost surely finite]

(ii) In case $\alpha \leq 0$ all results given above for $%
\sum_{t=1}^{n}\left\vert x_{t}\right\vert ^{-\alpha }$ carry over to $%
\sum_{t=0}^{n}\left\vert x_{t}\right\vert ^{-\alpha }$. For $\alpha >0$ this
is again so, provided the distribution of $x_{0}$ does not assign positive
mass to the point $0$; otherwise, $\sum_{t=0}^{n}\left\vert x_{t}\right\vert
^{-\alpha }$ is undefined on the event where $x_{0}=0$; if one chooses to
define $\left\vert x_{0}\right\vert ^{-\alpha }=\infty $ on this event, then
the above results clearly do not apply (except for the lower bound given in
Remark \ref{rem8} which then holds a fortiori).
\end{remark}

\subsection{Bounds on the Order of Magnitude of $\sum_{t=1}^{n}v_{t}\left%
\vert x_{t}\right\vert ^{-\protect\alpha }$}

We next illustrate how the above results can be used to derive upper bounds
on the order of magnitude of $\sum_{t=1}^{n}v_{t}\left\vert x_{t}\right\vert
^{-\alpha }$ where $v_{t}$ for $t\geq 1$ are random variables defined on the
same probability space as $x_{t}$. Note that this expression is almost
surely well-defined and finite for every $\alpha \in \mathbb{R}$.\footnote{%
In particular, how, and if, we assign a value in the extended real line to $%
v_{t}\left\vert x_{t}\right\vert ^{-\alpha }$ on the event $\left\{
x_{t}=0\right\} $ has no consequence for the results.} The leading case we
have in mind is $v_{t}=w_{t+1}^{k}$ where $k\in \mathbb{N}$. Applying the
Cauchy-Schwarz inequality gives almost surely%
\begin{equation*}
\left\vert \sum_{t=1}^{n}v_{t}\left\vert x_{t}\right\vert ^{-\alpha
}\right\vert \leq \left( \sum_{t=1}^{n}v_{t}^{2}\right) ^{1/2}\left(
\sum_{t=1}^{n}\left\vert x_{t}\right\vert ^{-2\alpha }\right) ^{1/2}.
\end{equation*}%
Hence, if $\sup_{t\geq 1}Ev_{t}^{2}<\infty $ (or more generally $%
\sum_{t=1}^{n}Ev_{t}^{2}=O(n)$) holds, we obtain from Theorem \ref%
{mainresult}%
\begin{equation}
\sum_{t=1}^{n}v_{t}\left\vert x_{t}\right\vert ^{-\alpha }=\left\{ 
\begin{array}{cc}
O_{\Pr }(n^{(\alpha +1)/2}) & \text{if \ \ }\alpha >1/2 \\ 
O_{\Pr }(n^{3/4}\left( \log n\right) ^{1/2}) & \text{if \ \ }\alpha =1/2 \\ 
O_{\Pr }(n^{1-\alpha /2}) & \text{if \ \ }-1\leq \alpha <1/2.%
\end{array}%
\right.  \label{simple}
\end{equation}%
Under the additional assumption $\sum_{j=0}^{\infty }j^{1/2}\left\vert \phi
_{j}\right\vert <\infty $ the bound $O_{\Pr }(n^{1-\alpha /2})$ in fact
holds also for $\alpha <-1$, cf. Remark \ref{rem2}. Variations of the above
bound can obviously be obtained by using H\"{o}lder's inequality.

\begin{remark}
\normalfont In the case $\alpha =0$ the problem reduces to determining the
order of $\sum_{t=1}^{n}v_{t}$, a problem to which this paper has nothing to
add to the literature. We only observe that in this case the above bound can
clearly be improved to $O_{\Pr }(n^{1/2})$ whenever $v_{t}$ satisfies a
central limit theorem (as is, e.g., the case if $v_{t}=w_{t+1}$), or
whenever $E\left( \sum_{t=1}^{n}v_{t}\right) ^{2}=O(n)$. The latter
condition is, e.g., satisfied if $v_{t}$ is mean-zero and weakly stationary
with absolutely summable covariance function, or if $v_{t}$ is a sequence of
uncorrelated mean-zero random variables satisfying $\sup_{t\geq
1}Ev_{t}^{2}<\infty $. We do not further comment on such improvements as
they are not related to the subject of the paper.
\end{remark}

We next provide improvements on the bound (\ref{simple}) under appropriate
assumptions on $v_{t}$. Note that the assumptions on $v_{t}$ in the
subsequent proposition are certainly satisfied if $v_{t}$ is independent of $%
x_{t}$ (or of $x_{t}-x_{0}$, respectively) for every $t\geq 1$ and the first
absolute moment of $v_{t}$ is bounded uniformly in $t$. In particular, these
assumptions are satisfied for the important special case $v_{t}=w_{t+1}^{k}$
provided that $\phi _{j}=0$ for all $j>0$ (implying that $w_{t}=\varepsilon
_{t}$) and that $E\left\vert \varepsilon _{t}\right\vert ^{k}$ is finite.%
\footnote{%
The condition that $\phi _{j}=0$ for all $j>0$ can of course be replaced by
the more general condition $\phi _{l}\neq 0$ for some $l\geq 0$ and $\phi
_{j}=0$ for all $j\neq l$. This equally applies to the discussion
immediately preceding Propositions \ref{LW} and \ref{ref}.}

\begin{proposition}
\label{indep}Suppose that in addition to the maintained assumptions we have
that $\sup_{t\geq 1}E(\left\vert v_{t}\right\vert )<\infty $ holds. Assume
further that $E(\left\vert v_{t}\right\vert \mid x_{t})=E(\left\vert
v_{t}\right\vert )$ almost surely holds for all $t\geq 1$ if $\alpha \geq 0$%
, and that $E(\left\vert v_{t}\right\vert \mid x_{t}-x_{0})=E(\left\vert
v_{t}\right\vert )$ almost surely holds for all $t\geq 1$ if $-2\leq \alpha
<0$. Then 
\begin{equation*}
\sum_{t=1}^{n}\left\vert v_{t}\right\vert \left\vert x_{t}\right\vert
^{-\alpha }=\left\{ 
\begin{array}{cc}
O_{\Pr }(n^{\alpha /2}) & \text{if \ \ }\alpha >1 \\ 
O_{\Pr }(n^{1/2}\log n) & \text{if \ \ }\alpha =1 \\ 
O_{\Pr }(n^{1-\alpha /2}) & \text{if \ \ }-2\leq \alpha <1.%
\end{array}%
\right.
\end{equation*}%
A fortiori the same bound then holds for $\sum_{t=1}^{n}v_{t}\left\vert
x_{t}\right\vert ^{-\alpha }$.
\end{proposition}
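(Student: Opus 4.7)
The plan is to mirror the proof of Theorem \ref{mainresult}, inserting the factor $|v_t|$ into every sum and using the tower property together with the conditional-mean hypothesis to replace $|v_t|$ by the deterministic constant $E|v_t|$ whenever an expectation is taken. Since $\sup_t E|v_t|<\infty$ by assumption, this constant is absorbed harmlessly into the Theorem's estimates. The ``a fortiori'' claim for $\sum v_t|x_t|^{-\alpha}$ is then immediate from the triangle inequality $|\sum v_t|x_t|^{-\alpha}|\le \sum |v_t||x_t|^{-\alpha}$.

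For $\alpha\ge 0$ I would first reduce to $t\ge t_{\ast}$ and decompose $\sum_{t=t_{\ast}}^{n}|v_t||x_t|^{-\alpha}=Q_n(\delta)+R_n(\delta)$ with the same indicator split as in Theorem \ref{mainresult}. The bound on the tail piece transfers verbatim: the summands of $R_n(\delta)$ being nonnegative gives $\{R_n(\delta)>0\}\subseteq S_n(\delta)$, so $\Pr(R_n(\delta)>0)\le 4\kappa\delta$ exactly as before, with no hypothesis on $v_t$ needed. For $EQ_n(\delta)$, both $|x_t|^{-\alpha}$ and the truncating indicator are $\sigma(x_t)$-measurable; conditioning on $x_t$ and invoking $E(|v_t|\mid x_t)=E|v_t|$ produces
\begin{equation*}
E\bigl(|v_t||x_t|^{-\alpha}\mathbf{1}(|t^{-1/2}x_t|>\delta/(nt)^{1/2})\bigr)=E|v_t|\cdot E\bigl(|x_t|^{-\alpha}\mathbf{1}(|t^{-1/2}x_t|>\delta/(nt)^{1/2})\bigr).
\end{equation*}
The right-hand expectation is then bounded by the same computation as in the theorem, and the Markov-inequality argument at the end of that proof delivers the claimed rates $n^{\alpha/2}$, $n^{1/2}\log n$, and $n^{1-\alpha/2}$ in the three sub-cases.

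For $-2\le\alpha<0$ I would apply inequality (\ref{smpl}) with $|v_t|$ carried through each summand, obtaining
\begin{equation*}
\sum_{t=1}^{n}|v_t||x_t|^{-\alpha}\le\max(1,2^{-\alpha-1})\left(\sum_{t=1}^{n}|v_t||x_t-x_0|^{-\alpha}+|x_0|^{-\alpha}\sum_{t=1}^{n}|v_t|\right).
\end{equation*}
Since $|x_t-x_0|^{-\alpha}$ is $\sigma(x_t-x_0)$-measurable, the hypothesis $E(|v_t|\mid x_t-x_0)=E|v_t|$ together with the Lyapunov bound on $E|x_t-x_0|^{-\alpha}$ from the theorem's proof gives $E\sum_{t=1}^{n}|v_t||x_t-x_0|^{-\alpha}=\sum_{t=1}^{n}E|v_t|\cdot E|x_t-x_0|^{-\alpha}=O(n^{1-\alpha/2})$, so Markov's inequality yields $\sum_{t=1}^{n}|v_t||x_t-x_0|^{-\alpha}=O_{\Pr}(n^{1-\alpha/2})$. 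The remainder $|x_0|^{-\alpha}\sum_{t=1}^{n}|v_t|$ is $O_{\Pr}(n)=O_{\Pr}(n^{1-\alpha/2})$ using $\sum|v_t|=O_{\Pr}(n)$ (Markov), the a.s.\ finiteness of $|x_0|^{-\alpha}$, and $n\le n^{1-\alpha/2}$ for $\alpha\le 0$. The only mildly subtle point is picking the correct $\sigma$-algebra in each regime---$\sigma(x_t)$ when $\alpha\ge 0$, $\sigma(x_t-x_0)$ when $\alpha<0$---which is precisely how the two halves of the hypothesis are calibrated; once this is observed, the argument of Theorem \ref{mainresult} goes through with essentially no new computation.
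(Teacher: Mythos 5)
Your proposal is correct and follows essentially the same route as the paper: the same $Q_n/R_n$ decomposition with the tower property applied to the bounded $\sigma(x_t)$-measurable factor for $\alpha\ge 0$, and the same split via inequality (\ref{smpl}) with conditioning on $x_t-x_0$ and the Lyapunov bound for $-2\le\alpha<0$. No substantive differences from the paper's argument.
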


\begin{proof}
Suppose $\alpha \geq 0$. For the same reasons as given in the proof of
Theorem \ref{mainresult} it suffices to bound $\sum_{t=t^{\ast
}}^{n}\left\vert v_{t}\right\vert \left\vert x_{t}\right\vert ^{-\alpha }$.
Define for $0<\delta <1$ 
\begin{equation*}
Q_{n}^{\prime }(\delta )=\sum_{t=t_{\ast }}^{n}\left\vert v_{t}\right\vert
\left\vert x_{t}\right\vert ^{-\alpha }\boldsymbol{1}\left( \left\vert
t^{-1/2}x_{t}\right\vert >\delta /(nt)^{1/2}\right)
\end{equation*}%
and 
\begin{equation*}
R_{n}^{\prime }(\delta )=\sum_{t=t_{\ast }}^{n}\left\vert v_{t}\right\vert
\left\vert x_{t}\right\vert ^{-\alpha }\boldsymbol{1}\left( \left\vert
t^{-1/2}x_{t}\right\vert \leq \delta /(nt)^{1/2}\right) .
\end{equation*}%
Observe that now the event $\{R_{n}^{\prime }(\delta )>0\}$ is contained in $%
S_{n}(\delta )$ up to null-sets where $S_{n}(\delta )$ has been defined in
the proof of Theorem \ref{mainresult}. Hence, 
\begin{equation*}
\Pr \left( R_{n}^{\prime }(\delta )>0\right) \leq 4\kappa \delta
\end{equation*}%
as shown in the proof of Theorem \ref{mainresult}. Furthermore, since $%
\left\vert v_{t}\right\vert $ is integrable and $\left\vert x_{t}\right\vert
^{-\alpha }\boldsymbol{1}\left( \left\vert t^{-1/2}x_{t}\right\vert >\delta
/(nt)^{1/2}\right) $ is a bounded $x_{t}$-measurable random variable, the
law of iterated expectations and the assumptions on $v_{t}$ imply that%
\begin{equation*}
EQ_{n}^{\prime }(\delta )\leq \left( \sup_{t\geq 1}E(\left\vert
v_{t}\right\vert )\right) \sum_{t=t_{\ast }}^{n}t^{-\alpha /2}E\left(
\left\vert t^{-1/2}x_{t}\right\vert ^{-\alpha }\boldsymbol{1}(\left\vert
t^{-1/2}x_{t}\right\vert >\delta /(nt)^{1/2})\right)
\end{equation*}%
holds. The remainder of the proof is then identical to the proof of Theorem %
\ref{mainresult}. Next suppose $-2\leq \alpha <0$. Then%
\begin{equation}
\sum_{t=1}^{n}\left\vert v_{t}\right\vert \left\vert x_{t}\right\vert
^{-\alpha }\leq \max \left( 1,2^{-\alpha -1}\right) \left(
\sum_{t=1}^{n}\left\vert v_{t}\right\vert \left\vert x_{t}-x_{0}\right\vert
^{-\alpha }+\left\vert x_{0}\right\vert ^{-\alpha }\sum_{t=1}^{n}\left\vert
v_{t}\right\vert \right) .  \label{split}
\end{equation}%
Observe that the second sum on the right-hand side of the above display is $%
O_{\Pr }(n)$ by an application of Markov's inequality (since $E\left\vert
v_{t}\right\vert $ is uniformly bounded by assumption) and since $\left\vert
x_{0}\right\vert ^{-\alpha }$ is well-defined and real-valued. Furthermore,
since $\left\vert v_{t}\right\vert $ is integrable and $\left\vert
x_{t}-x_{0}\right\vert ^{-\alpha }$ is a nonnegative real-valued random
variable we may use the law of iterated expectations again (conditioning
being on $x_{t}-x_{0}$) to obtain that the expectation of the first sum in (%
\ref{split}) is bounded by 
\begin{equation*}
\left( \sup_{t\geq 1}E(\left\vert v_{t}\right\vert )\right)
\sum_{t=1}^{n}E\left( \left\vert x_{t}-x_{0}\right\vert ^{-\alpha }\right) .
\end{equation*}%
This bound is then further treated exactly as in the proof of Theorem \ref%
{mainresult}.
\end{proof}

\begin{remark}
\normalfont If $Ex_{0}^{2}<\infty $ is assumed, the condition $E(\left\vert
v_{t}\right\vert \mid x_{t}-x_{0})=E(\left\vert v_{t}\right\vert )$ almost
surely can be replaced by $E(\left\vert v_{t}\right\vert \mid
x_{t})=E(\left\vert v_{t}\right\vert )$ almost surely also in case $-2\leq
\alpha <0$. The proof then proceeds by directly bounding $%
E\sum_{t=1}^{n}\left\vert v_{t}\right\vert \left\vert x_{t}\right\vert
^{-\alpha }$ by $\left( \sup_{t\geq 1}E(\left\vert v_{t}\right\vert )\right)
\sum_{t=1}^{n}E\left( \left\vert x_{t}\right\vert ^{-\alpha }\right) $; cf.
Remark \ref{negative}(ii).
\end{remark}

We next turn to the case where $v_{t}$ is a martingale difference sequence.
The improvement over the bound (\ref{simple}) is obtained in this case by
observing that the sequence $\sum_{t=1}^{n}v_{t}\left\vert x_{t}\right\vert
^{-\alpha }$ is then a martingale transform and by combining Theorem \ref%
{mainresult} with results in Lai and Wei (1982). [Note that $%
\sum_{t=1}^{n}v_{t}\left\vert x_{t}\right\vert ^{-\alpha }$ will typically
not be a martingale as the first moment will in general not exist, cf.
Remark \ref{rem0}(ii); hence, martingale central limit theorems are not
applicable.] The assumptions in the subsequent proposition are in particular
satisfied in the important special case where $v_{t}=w_{t+1}$ and $\phi
_{j}=0$ for all $j>0$ (implying that $v_{t}=w_{t+1}=\varepsilon _{t+1}$) by
choosing $\mathcal{F}_{t}$ as the $\sigma $-field generated by $%
x_{t+1},\ldots ,x_{1}$ for $t\geq 0$.

\begin{proposition}
\label{LW}Suppose that in addition to the maintained assumptions we have
that $(v_{t})_{t\geq 1}$ is a martingale difference sequence with respect to
a filtration $(\mathcal{F}_{t})_{t\geq 0}$ such that $\sup_{t\geq 1}E\left(
v_{t}^{2}\mid \mathcal{F}_{t-1}\right) <\infty $ holds almost surely. Assume
further that $x_{t}$ is $\mathcal{F}_{t-1}$-measurable for every $t\geq 1$.

(a) Then%
\begin{equation*}
\sum_{t=1}^{n}v_{t}\left\vert x_{t}\right\vert ^{-\alpha }=\left\{ 
\begin{array}{cc}
o_{\Pr }(n^{\alpha /2}\left( \log n\right) ^{1/2+\tau }) & \text{if \ }%
\alpha >1/2 \\ 
o_{\Pr }(n^{1/4}(\log n)^{1+\tau }) & \text{if \ }\alpha =1/2 \\ 
o_{\Pr }(n^{(1-\alpha )/2}\left( \log n\right) ^{1/2+\tau }) & \text{if \ }%
-1\leq \alpha <1/2%
\end{array}%
\right.
\end{equation*}%
holds for every $\tau >0$. Under the additional assumption $%
\sum_{j=0}^{\infty }j^{1/2}\left\vert \phi _{j}\right\vert <\infty $ the
bound given for the range $-1\leq \alpha <1/2$ continues to hold for the
range $-\infty <\alpha <1/2$.

(b) 
\begin{equation*}
\sum_{t=1}^{n}v_{t}^{2}\left\vert x_{t}\right\vert ^{-\alpha }=\left\{ 
\begin{array}{cc}
o_{\Pr }\left( n^{\alpha /2+\tau }\right) & \text{if \ \ }\alpha \geq 1 \\ 
o_{\Pr }\left( n^{1-\alpha /2+\tau }\right) & \text{if \ \ }-2\leq \alpha <1%
\end{array}%
\right.
\end{equation*}%
holds for every $\tau >0$. Under the additional assumption $%
\sum_{j=0}^{\infty }j^{1/2}\left\vert \phi _{j}\right\vert <\infty $ the
bound given for the range $-2\leq \alpha <1$ continues to hold for the range 
$-\infty <\alpha <1$.
\end{proposition}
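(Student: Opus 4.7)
The plan is to leverage the martingale structure afforded by the assumption that $x_t$ is $\mathcal{F}_{t-1}$-measurable and then combine a Lai--Wei (1982)-type strong law with Theorem~\ref{mainresult}. Specifically, because $|x_t|^{-\alpha}$ and $|x_t|^{-2\alpha}$ are both previsible, $M_n := \sum_{t=1}^{n} v_t |x_t|^{-\alpha}$ is a local martingale and its predictable quadratic variation is dominated by $C\sum_{t=1}^{n} |x_t|^{-2\alpha}$, where $C := \operatorname{ess\,sup}_{t\ge 1} E(v_t^2 \mid \mathcal{F}_{t-1})$ is a.s.\ finite by hypothesis.

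For part (a) I would invoke the Lai--Wei bound for martingale transforms with bounded conditional second moments to conclude that, almost surely (on the event where the quadratic variation diverges; otherwise $M_n$ is bounded by martingale convergence),
$$M_n = o\!\left(A_n^{1/2}(\log A_n)^{1/2+\tau}\right)\qquad \text{for every } \tau>0,$$
where $A_n := \sum_{t=1}^n |x_t|^{-2\alpha}$. Theorem~\ref{mainresult} applied with $\alpha$ replaced by $2\alpha$ then yields $A_n = O_{\Pr}(n^{\alpha})$ if $\alpha>1/2$, $A_n = O_{\Pr}(n^{1/2}\log n)$ if $\alpha=1/2$, and $A_n = O_{\Pr}(n^{1-\alpha})$ if $-1\le \alpha <1/2$. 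Taking square roots and multiplying by the logarithmic factor (while absorbing any auxiliary $\log n$ into the $(\log n)^{1/2+\tau}$ prefactor) reproduces the three asserted bounds; the extension to $\alpha<-1$ under $\sum j^{1/2}|\phi_j|<\infty$ follows identically from the sharper order estimate for $A_n$ recorded in Remark~\ref{rem2}.

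For part (b) I would use the Doob--Meyer decomposition $v_t^2 = E(v_t^2\mid\mathcal{F}_{t-1}) + (v_t^2 - E(v_t^2\mid\mathcal{F}_{t-1}))$, so that the nondecreasing adapted process $S_n := \sum_{t=1}^n v_t^2 |x_t|^{-\alpha}$ has predictable compensator $T_n := \sum_{t=1}^n E(v_t^2\mid\mathcal{F}_{t-1})|x_t|^{-\alpha}$ satisfying $T_n \le C \sum_{t=1}^n |x_t|^{-\alpha} = O_{\Pr}(d_n)$, where $d_n$ is the order bound for $\sum |x_t|^{-\alpha}$ from Theorem~\ref{mainresult}; the $\log n$ at $\alpha=1$ is absorbed into the $n^{\tau}$ factor below. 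Lenglart's inequality then gives, for any $\eta, M, \tau > 0$,
$$\Pr(S_n > \eta\, d_n\, n^{\tau}) \le \frac{M}{\eta\, n^{\tau}} + \Pr(T_n \ge M d_n),$$
and letting $n\to\infty$ followed by $M\to\infty$ yields $S_n = o_{\Pr}(d_n\, n^{\tau})$ for every $\tau>0$, which is exactly the claim. The extension to $\alpha < -2$ under the stronger summability is obtained by using Remark~\ref{rem2} in place of Theorem~\ref{mainresult} in the bound on $T_n$.

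The main obstacle is pinning down the precise form of the Lai--Wei (1982) bound that suffices for part (a) under merely the assumption of a.s.\ bounded conditional second moments, and verifying that the \emph{local}-martingale nature of $M_n$ (whose mean need not exist, cf.\ Remark~\ref{rem0}(ii)) does not impair the cited strong law. Part (b) is genuinely softer, since Lenglart's inequality is tailored to nonnegative processes with a predictable compensator and sidesteps the absence of first moments.
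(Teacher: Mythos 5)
Your proposal is correct, and it is a half-and-half match with the paper. Part (a) is essentially the paper's own argument: both invoke Lemma 2(iii) of Lai and Wei (1982) for the martingale transform with $\mathcal{F}_{t-1}$-measurable weights $\left\vert x_{t}\right\vert ^{-\alpha }$ and then feed in Theorem \ref{mainresult} applied to $2\alpha $ (and Remark \ref{rem2} for the extension below $-1$); the only difference is how the event $\left\{ \sum_{t}\left\vert x_{t}\right\vert ^{-2\alpha }<\infty \right\} $ is treated -- the paper rules it out altogether by a recurrence argument (the non-lattice recurrent random walk puts $\left\vert x_{t}\right\vert $ in $(1/2,3/2)$ infinitely often a.s., so the weight sums diverge a.s.\ for $\alpha \neq 0$), whereas you dispose of it via the convergence part of the Lai--Wei lemma; both work, and your dichotomy saves the recurrence step. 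In part (b) you genuinely depart from the paper, which again uses Lai--Wei, namely $\sum_{t\leq n}v_{t}^{2}\left\vert x_{t}\right\vert ^{-\alpha }=o\bigl( \bigl( \sum_{t\leq n}\left\vert x_{t}\right\vert ^{-\alpha }\bigr) ^{1+\theta }\bigr) $ a.s., combined with Theorem \ref{mainresult} (the $n^{\tau }$ absorbing both the $\theta $-power and the $\log n$ at $\alpha =1$); you instead compensate $S_{n}=\sum_{t\leq n}v_{t}^{2}\left\vert x_{t}\right\vert ^{-\alpha }$ by $T_{n}=\sum_{t\leq n}E(v_{t}^{2}\mid \mathcal{F}_{t-1})\left\vert x_{t}\right\vert ^{-\alpha }$ and apply Lenglart's domination inequality. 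That route is sound: the domination $ES_{\tau }\leq ET_{\tau }$ holds by Tonelli and previsibility of $\left\vert x_{t}\right\vert ^{-\alpha }$ with no integrability needed, $T_{n}\leq C\sum_{t\leq n}\left\vert x_{t}\right\vert ^{-\alpha }$ with $C=\sup_{t}E(v_{t}^{2}\mid \mathcal{F}_{t-1})$ an a.s.\ finite \emph{random} variable (not a constant, but that is harmless for $O_{\Pr }$ purposes) is still $O_{\Pr }(d_{n})$ by Theorem \ref{mainresult}, and your displayed inequality then yields $S_{n}=o_{\Pr }(d_{n}n^{\tau })$, i.e.\ the claim. The trade-off: the paper's single citation covers both parts at once but needs the a.s.\ divergence of the weight sums and states (b) as an a.s.\ rate relative to a random sum, while your Lenglart step gives the in-probability bound directly, without divergence or moment considerations, at the cost of the compensator bookkeeping. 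The only place to be careful -- and the paper faces the same issue -- is the final conversion in (a) of the a.s.\ bound $o\bigl( A_{n}^{1/2}(\log A_{n})^{1/2+\tau }\bigr) $ together with $A_{n}=O_{\Pr }(a_{n})$ into $o_{\Pr }\bigl( a_{n}^{1/2}(\log n)^{1/2+\tau }\bigr) $, which uses monotonicity of $x\mapsto x^{1/2}(\log x)^{1/2+\tau }$ for large $x$ and should be spelled out.
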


\begin{proof}
Since $\sum_{s=1}^{t}w_{s}$ is a (nondegenerate) recurrent random walk under
the assumptions of the proposition that is not of the lattice-type (as it
has uncountably many possible values in the sense of Chung (2001, Section
8.3) by Lebesgue's differentiation theorem), it visits every interval
infinitely often almost surely. From independence of $x_{0}$ and $%
(w_{s})_{s\geq 1}$ we may conclude that almost surely $\left\vert
x_{t}\right\vert $ falls into the interval $(1/2,3/2)$ infinitely often.
This shows that the sum $\sum_{t=1}^{n}\left\vert x_{t}\right\vert ^{-\alpha
}$ diverges almost surely for every value $\alpha \neq 0$, the divergence
being trivial in case $\alpha =0$. Now apply Lemma 2(iii) in Lai and Wei
(1982) to conclude that%
\begin{equation*}
\sum_{t=1}^{n}v_{t}\left\vert x_{t}\right\vert ^{-\alpha }=o\left( \left(
\sum_{t=1}^{n}\left\vert x_{t}\right\vert ^{-2\alpha }\right) ^{1/2}\left(
\log \sum_{t=1}^{n}\left\vert x_{t}\right\vert ^{-2\alpha }\right)
^{1/2+\theta }\right) \text{ \ }a.s.
\end{equation*}%
and%
\begin{equation*}
\sum_{t=1}^{n}v_{t}^{2}\left\vert x_{t}\right\vert ^{-\alpha }=o\left(
\left( \sum_{t=1}^{n}\left\vert x_{t}\right\vert ^{-\alpha }\right)
^{1+\theta }\right) \text{ \ }a.s.
\end{equation*}%
for every $\theta >0$. Apply Theorem \ref{mainresult} as well as Remark \ref%
{rem2} (applied to $2\alpha $ and $\alpha $, respectively) to complete the
proof.
\end{proof}

\begin{remark}
\normalfont If $\sup_{t\geq 1}E\left( \left\vert v_{t}\right\vert ^{\gamma
}\mid \mathcal{F}_{t-1}\right) <\infty $ almost surely holds for some $%
\gamma >2$, applying Corollary 2 in Lai and Wei (1982) yields the slightly
better bound 
\begin{equation*}
\sum_{t=1}^{n}v_{t}\left\vert x_{t}\right\vert ^{-\alpha }=\left\{ 
\begin{array}{cc}
O_{\Pr }(n^{\alpha /2}\left( \log n\right) ^{1/2}) & \text{if \ }\alpha >1/2
\\ 
O_{\Pr }(n^{1/4}\log n) & \text{if \ }\alpha =1/2 \\ 
O_{\Pr }(n^{(1-\alpha )/2}\left( \log n\right) ^{1/2}) & \text{if \ }-1\leq
\alpha <1/2,%
\end{array}%
\right.
\end{equation*}%
where under the additional condition $\sum_{j=0}^{\infty }j^{1/2}\left\vert
\phi _{j}\right\vert <\infty $ the bound for the range $-1\leq \alpha <1/2$
again continues to hold for $-\infty <\alpha <1/2$.
\end{remark}

In case the martingale difference sequence is square-integrable with a
nonrandom conditional variance the bound in Part (a) of the above
proposition can be somewhat improved. I owe this observation to a referee.
Note that the subsequent proposition in particular covers the important
special case $v_{t}=w_{t+1}=\varepsilon _{t+1}$ mentioned above.

\begin{proposition}
\label{ref}Suppose that in addition to the maintained assumptions we have
that $(v_{t})_{t\geq 1}$ is a martingale difference sequence with respect to
a filtration $(\mathcal{F}_{t})_{t\geq 0}$ such that $E\left( v_{t}^{2}\mid 
\mathcal{F}_{t-1}\right) =E\left( v_{t}^{2}\right) $ holds almost surely for
all $t\geq 1$ and such that $\sup_{t\geq 1}E\left( v_{t}^{2}\right) <\infty $%
. Assume further that $x_{t}$ is $\mathcal{F}_{t-1}$-measurable for every $%
t\geq 1$. For the case $-1\leq \alpha <0$ assume additionally $%
Ex_{0}^{2}<\infty $. Then%
\begin{equation*}
\sum_{t=1}^{n}v_{t}\left\vert x_{t}\right\vert ^{-\alpha }=\left\{ 
\begin{array}{cc}
O_{\Pr }(n^{\alpha /2}) & \text{if \ }\alpha >1/2 \\ 
O_{\Pr }(n^{1/4}\left( \log n\right) ^{1/2}) & \text{if \ }\alpha =1/2 \\ 
O_{\Pr }(n^{(1-\alpha )/2}) & \text{if \ }-1\leq \alpha <1/2%
\end{array}%
\right.
\end{equation*}%
holds.
\end{proposition}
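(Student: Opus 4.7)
The plan is to exploit the martingale transform structure: since each $x_t$ is $\mathcal{F}_{t-1}$-measurable and $(v_t)$ is a martingale difference sequence, the cross terms in $E[(\sum_t v_t a_t(x_t))^2]$ vanish for any bounded measurable function $a_t$, yielding the isometry
\begin{equation*}
E\Bigl[\bigl(\sum_{t=1}^{n} v_t a_t(x_t)\bigr)^2\Bigr]=\sum_{t=1}^{n}E[v_t^2 a_t(x_t)^2]\le \sup_{t\ge 1}E(v_t^2)\sum_{t=1}^{n}E[a_t(x_t)^2],
\end{equation*}
where the last step uses the conditional variance hypothesis $E(v_t^2\mid\mathcal{F}_{t-1})=E(v_t^2)$ together with the tower property. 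This reduces the problem to controlling $L^2$-expectations of the $x_t$-part, which is amenable to the same density-bound machinery used in Theorem \ref{mainresult}.

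For $\alpha\ge 0$, I would mimic the truncation from the proof of Theorem \ref{mainresult}. Split $\sum_{t=t_{\ast}}^{n}v_t|x_t|^{-\alpha}=Q_n''(\delta)+R_n''(\delta)$ along the indicators $\mathbf{1}(|t^{-1/2}x_t|>\delta/(nt)^{1/2})$ and its complement. The remainder satisfies $\{R_n''(\delta)\neq 0\}\subseteq S_n(\delta)$, so $\Pr(R_n''(\delta)\neq 0)\le 4\kappa\delta$ is inherited verbatim. For $Q_n''(\delta)$, the truncating indicator is $\mathcal{F}_{t-1}$-measurable and bounded, so the martingale isometry above applies and gives
\begin{equation*}
E(Q_n''(\delta)^2)\le C\sum_{t=t_{\ast}}^{n}t^{-\alpha}E\Bigl[|t^{-1/2}x_t|^{-2\alpha}\mathbf{1}(|t^{-1/2}x_t|>\delta/(nt)^{1/2})\Bigr].
\end{equation*}
The inner expectation is exactly the quantity estimated in Theorem \ref{mainresult}, but with exponent $2\alpha$ in place of $\alpha$. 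Plugging in the bound $\int_{\delta/(nt)^{1/2}}^{1}z^{-2\alpha}\,dz$ and summing yields $E(Q_n''(\delta)^2)=O(n^{\alpha})$ when $2\alpha>1$, $O(n^{1/2}\log n)$ when $2\alpha=1$, and $O(n^{1-\alpha})$ when $0\le 2\alpha<1$. Markov's inequality then delivers the three stated rates. Combined with the choice of $\delta=\delta(\varepsilon)$ exactly as in the proof of Theorem \ref{mainresult}, this completes the case $\alpha\ge 0$.

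For $-1\le\alpha<0$ with $Ex_0^2<\infty$, no truncation is needed. Apply the isometry directly to obtain
\begin{equation*}
E\Bigl[\bigl(\sum_{t=1}^{n}v_t|x_t|^{-\alpha}\bigr)^2\Bigr]\le C\sum_{t=1}^{n}E|x_t|^{-2\alpha},
\end{equation*}
and bound $E|x_t|^{-2\alpha}\le (Ex_t^2)^{-\alpha}=O(t^{-\alpha})$ by Lyapunov's inequality (legitimate because $0<-2\alpha\le 2$), exactly as in Remark \ref{negative}(ii). The sum is $O(n^{1-\alpha})$, giving the desired $O_{\Pr}(n^{(1-\alpha)/2})$ rate via Markov.

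The only subtle point — and therefore the main thing to get right — is confirming that the martingale isometry genuinely applies to $Q_n''(\delta)$. This is guaranteed because, on the truncation event, $|x_t|\ge \delta/n^{1/2}$, so $|x_t|^{-\alpha}\mathbf{1}(\cdot)$ is uniformly bounded (by $n^{\alpha/2}\delta^{-\alpha}$ for $\alpha>0$); together with $\sup_t E(v_t^2)<\infty$ this makes each summand square-integrable and legitimizes the termwise orthogonality computation. All other steps reuse verbatim the density bound \eqref{densitybound} and the integral estimates already carried out in Theorem \ref{mainresult}.
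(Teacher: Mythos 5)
Your proposal is correct and follows essentially the same route as the paper's proof: the same truncation $Q_n+R_n$ with the event $S_n(\delta)$ bound $4\kappa\delta$, the same martingale-transform second-moment identity justified by $\mathcal{F}_{t-1}$-measurability of $x_t$ and the nonrandom conditional variance, the same integral estimates with exponent $2\alpha$, and for $-1\le\alpha<0$ the same direct $L^2$ bound via $E|x_t|^{-2\alpha}=O(t^{-\alpha})$ from Lyapunov's inequality and $Ex_0^2<\infty$. Your added remark on the boundedness of the truncated factor (hence square-integrability of the summands) is exactly the point the paper also makes, so no gap remains.
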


\begin{proof}
Assume $\alpha \geq 0$ first. For the same reasons as given in the proof of
Theorem \ref{mainresult} it suffices to bound $\sum_{t=t^{\ast
}}^{n}v_{t}\left\vert x_{t}\right\vert ^{-\alpha }$. For $0<\delta <1$ write 
$\sum_{t=t^{\ast }}^{n}v_{t}\left\vert x_{t}\right\vert ^{-\alpha }$ as $%
Q_{n}^{\ast }(\delta )+R_{n}^{\ast }(\delta )$ where 
\begin{equation*}
Q_{n}^{\ast }(\delta )=\sum_{t=t_{\ast }}^{n}v_{t}\left\vert
x_{t}\right\vert ^{-\alpha }\boldsymbol{1}\left( \left\vert
t^{-1/2}x_{t}\right\vert >\delta /(nt)^{1/2}\right)
\end{equation*}%
and 
\begin{equation*}
R_{n}^{\ast }(\delta )=\sum_{t=t_{\ast }}^{n}v_{t}\left\vert
x_{t}\right\vert ^{-\alpha }\boldsymbol{1}\left( \left\vert
t^{-1/2}x_{t}\right\vert \leq \delta /(nt)^{1/2}\right) .
\end{equation*}%
Observe that $\left\{ \left\vert R_{n}^{\ast }(\delta )\right\vert
>0\right\} \subseteq S_{n}(\delta )$ up to null-sets, and hence $\Pr \left(
\left\vert R_{n}^{\ast }(\delta )\right\vert >0\right) \leq 4\kappa \delta $
as shown in the proof of Theorem \ref{mainresult}. Observe that the terms
making up $Q_{n}^{\ast }(\delta )$ have a finite second moment since the
factor multiplying $v_{t}$ is bounded in view of $\alpha \geq 0$. By the
martingale difference property of $v_{t}$, by the assumptions on its
conditional variance, and since $x_{t}$ is $\mathcal{F}_{t-1}$-measurable we
obtain arguing similarly as in the proof of Theorem \ref{mainresult} and
setting $c=\sup_{t\geq 1}E\left( v_{t}^{2}\right) $%
\begin{eqnarray*}
EQ_{n}^{\ast }(\delta )^{2} &=&\sum_{t=t_{\ast }}^{n}Ev_{t}^{2}E\left(
\left\vert x_{t}\right\vert ^{-2\alpha }\boldsymbol{1}\left( \left\vert
t^{-1/2}x_{t}\right\vert >\delta /(nt)^{1/2}\right) \right) \\
&\leq &c\sum_{t=t_{\ast }}^{n}t^{-\alpha }E\left( \left\vert
t^{-1/2}x_{t}\right\vert ^{-2\alpha }\boldsymbol{1}\left( 1>\left\vert
t^{-1/2}x_{t}\right\vert >\delta /(nt)^{1/2}\right) \right) \\
&&+c\sum_{t=t_{\ast }}^{n}t^{-\alpha }E\left( \left\vert
t^{-1/2}x_{t}\right\vert ^{-2\alpha }\boldsymbol{1}\left( \left\vert
t^{-1/2}x_{t}\right\vert \geq 1\right) \right) \\
&\leq &c\sum_{t=t_{\ast }}^{n}t^{-\alpha }\left( 2\kappa \int_{\delta
/(nt)^{1/2}}^{1}z^{-2\alpha }dz\ +\ 1\right) .
\end{eqnarray*}%
This gives the bound%
\begin{equation*}
EQ_{n}^{\ast }(\delta )^{2}=\left\{ 
\begin{array}{c}
O\left( n^{\alpha }\right) \text{ \ if }\alpha >1/2 \\ 
O\left( n^{1/2}\log n\right) \text{ \ if \ }\alpha =1/2 \\ 
O\left( n^{1-\alpha }\right) \text{ \ if }0\leq \alpha <1/2.%
\end{array}%
\right.
\end{equation*}%
An argument similar to the one in the proof of Theorem \ref{mainresult} then
completes the proof in the case $\alpha \geq 0$. Next consider the case $%
-1\leq \alpha <0$. Since $Ex_{0}^{2}<\infty $ is assumed, we have that $%
\left\vert x_{t}\right\vert ^{-\alpha }$ is square-integrable for $-1\leq
\alpha <0$. Since $v_{t}$ is square-integrable by assumption, it follows
that $v_{t}\left\vert x_{t}\right\vert ^{-\alpha }$ is integrable and hence
is a martingale difference sequence w.r.t. $(\mathcal{F}_{t})_{t\geq 0}$. In
fact, $v_{t}\left\vert x_{t}\right\vert ^{-\alpha }$ is even
square-integrable for $-1\leq \alpha <0$: since $v_{t}^{2}$ and $\left\vert
x_{t}\right\vert ^{-2\alpha }$ are nonnegative and integrable, the law of
iterated expectations and the assumptions imply%
\begin{equation*}
E\left( v_{t}^{2}\left\vert x_{t}\right\vert ^{-2\alpha }\right) =E\left(
\left\vert x_{t}\right\vert ^{-2\alpha }E\left( v_{t}^{2}\mid \mathcal{F}%
_{t-1}\right) \right) =E\left( \left\vert x_{t}\right\vert ^{-2\alpha
}\right) E\left( v_{t}^{2}\right) <\infty .
\end{equation*}%
Now, $v_{t}\left\vert x_{t}\right\vert ^{-\alpha }$ being a
square-integrable martingale difference sequence implies that%
\begin{equation*}
E\left( \sum_{t=1}^{n}v_{t}\left\vert x_{t}\right\vert ^{-\alpha }\right)
^{2}=\sum_{t=1}^{n}Ev_{t}^{2}E\left( \left\vert x_{t}\right\vert ^{-2\alpha
}\right) \leq \sup_{t\geq 1}E\left( v_{t}^{2}\right)
c_{1}\sum_{t=1}^{n}t^{-\alpha }=O\left( n^{1-\alpha }\right)
\end{equation*}%
where we use the fact that $E\left\vert x_{t}\right\vert ^{-2\alpha }\leq
c_{1}t^{-\alpha }$ for a finite constant $c_{1}$ as shown in Remark \ref%
{negative}(ii). An application of Markov's inequality then proves the result.
\end{proof}

\begin{remark}
\normalfont We note that the bounds in Propositions \ref{indep} and \ref{ref}
are given only for $\alpha \geq -2$ or $\alpha \geq -1$, respectively. We
have not invested effort into extending the validity of these bounds beyond
this range. In the special case $v_{t}=w_{t+1}$ the bound for $%
\sum_{t=1}^{n}v_{t}\left\vert x_{t}\right\vert ^{-\alpha }$ is again $O_{\Pr
}(n^{(1-\alpha )/2})$ for $\alpha \leq -2$; this follows from Theorem 3.1 in
Ibragimov and Phillips (2008) which establishes distributional convergence
of $n^{(\alpha -1)/2}\sum_{t=1}^{n}w_{t+1}\left\vert x_{t}\right\vert
^{-\alpha }$. This theorem makes assumptions on the process $x_{t}$ that are
stronger in some dimensions (e.g., higher moment assumptions) but are weaker
in other respects (e.g., no assumption about existence of a density).
However, for $\alpha >-2$ (which includes the case of negative powers of
interest here) the results in Ibragimov and Phillips (2008) do not apply.
\end{remark}

\section*{\textbf{References}}

\ \ \ Berkes, I. \& L. Horvath (2006): "Convergence of Integral Functionals
of Stochastic Processes", Econometric Theory 22, 304--322.

Borodin, A.N. \& I.A. Ibragimov (1995): "Limit Theorems for Functionals of
Random Walks", Proceedings of the Steklov Institute of Mathematics 195(2).

Christopeit, N. (2009): "Weak Convergence of Nonlinear Transformations of
Integrated Processes: The Multivariate Case", Econometric Theory 25,
1180--1207.

Chung, K.L. (2001): A Course in Probability Theory. 3rd ed. Academic Press.

Chung, K.L. \& R.J. Williams (1990): Introduction to Stochastic Integration.
2nd ed. Birkh\"{a}user.

de Jong, R.M. (2004): "Addendum to 'Asymptotics for Nonlinear
Transformations of Integrated Time Series'", Econometric Theory 20, 627-635.

de Jong, R.M. \& C. Wang (2005): "Further Results on the Asymptotics for
Nonlinear Transformations of Integrated Time Series", Econometric Theory 21,
413-430.

Jeganathan, P. (2004): "Convergence of Functionals of Sums of R.V.s to Local
Times of Fractional Stable Motions", Annals of Probability 32, 1771--1795.

Lai, T.L. \& C.Z. Wei (1982): "Least Squares Estimates in Stochastic
Regression Models With Applications to Identification and Control of Dynamic
Systems", Annals of Statistics 10, 154-166.

Ibragimov, R. \& P.C.B. Phillips (2008): "Regression Asymptotics Using
Martingale Convergence Methods", Econometric Theory 24, 888-947.

Park, J.Y. \& P.C.B. Phillips (1999): "Asymptotics for Nonlinear
Transformations of Integrated Time Series", Econometric Theory 15, 269-298.

P\"{o}tscher, B.M. (2004): "Nonlinear Functionals and Convergence to
Brownian Motion: Beyond the Continuous Mapping Theorem", Econometric Theory
20, 1-22.

\appendix

\section{Appendix}

We first present a variant of Theorem 3 in Jeganathan (2004). If $x_{0}=0$,
the subsequent proposition follows immediately from Theorem 3 in Jeganathan
(2004). As we show in the proof below, for general $x_{0}$ the proposition
follows from that theorem combined with Remark 4 in Jeganathan (2004) plus a
conditioning argument. We also note that the assumptions on $x_{t}$ that we
maintain here are stronger than necessary and the proposition could also be
established under weaker conditions similar to the ones used in Jeganathan
(2004). We do not discuss such a more general result here.

\begin{proposition}
\label{jeganathan}Suppose $f$ is a Lebesgue-integrable real-valued function
on $\mathbb{R}$ that is bounded. Then, under the maintained assumptions on $%
x_{t}$, it holds that%
\begin{equation}
n^{-1}\sum_{t=1}^{n}\beta _{n}f(n^{-1/2}\beta _{n}x_{t})\overset{d}{%
\rightarrow }\left\vert \sigma \right\vert ^{-1}\left( \int_{-\infty
}^{\infty }f(y)dy\right) L(1,0)  \label{jegan}
\end{equation}%
for any sequence $\beta _{n}$ satisfying $\beta _{n}\rightarrow \infty $ and 
$n^{-1}\beta _{n}\rightarrow 0$. (Recall $\sigma =\sum_{j=0}^{\infty }\phi
_{j}\neq 0$.)
\end{proposition}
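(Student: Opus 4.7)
My plan is to reduce to the case $x_{0}=0$ by a conditioning argument that exploits the independence of $x_{0}$ and the noise process $(w_{t})_{t\geq 1}$.

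First, I would write $x_{t}=x_{0}+\tilde{x}_{t}$ where $\tilde{x}_{t}=\sum_{s=1}^{t}w_{s}$ is a random walk started at the origin and is independent of $x_{0}$. Fixing a value $x_{0}=y$, the quantity of interest becomes
\begin{equation*}
Z_{n}(y)=n^{-1}\sum_{t=1}^{n}\beta _{n}f\bigl(n^{-1/2}\beta _{n}(y+\tilde{x}_{t})\bigr).
\end{equation*}
To this I would apply Theorem 3 of Jeganathan (2004) together with its Remark 4, the latter accommodating the deterministic shift by $y$ inside the argument of $f$. This yields $Z_{n}(y)\overset{d}{\rightarrow }\left\vert \sigma \right\vert ^{-1}\bigl(\int_{-\infty }^{\infty }f(z)\,dz\bigr)L(1,0)$ for every fixed $y$, and crucially the limiting distribution does not depend on $y$.

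Second, I would transfer conditional convergence to unconditional convergence via characteristic functions. Let $\varphi _{n}(u\mid y)=E[\exp (iuZ_{n}(y))]$ and let $\varphi (u)$ denote the characteristic function of the limit random variable $\left\vert \sigma \right\vert ^{-1}\bigl(\int f\bigr)L(1,0)$. By Step 1, $\varphi _{n}(u\mid y)\rightarrow \varphi (u)$ for every $y$ and every $u\in \mathbb{R}$. Because $x_{0}$ is independent of $(\tilde{x}_{t})_{t\geq 1}$, the unconditional characteristic function of $n^{-1}\sum_{t=1}^{n}\beta _{n}f(n^{-1/2}\beta _{n}x_{t})$ equals $E[\varphi _{n}(u\mid x_{0})]$; since $|\varphi _{n}(u\mid y)|\leq 1$, bounded convergence gives $E[\varphi _{n}(u\mid x_{0})]\rightarrow \varphi (u)$. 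L\'{e}vy's continuity theorem then yields (\ref{jegan}).

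The main obstacle is verifying that Remark 4 of Jeganathan (2004) really permits the shift inside $f$ by the constant $y$. This is non-trivial because, after scaling by $n^{-1/2}\beta _{n}$, the shift becomes $n^{-1/2}\beta _{n}y$, which may diverge (e.g.,\ if $\beta _{n}=n^{2/3}$ then $n^{-1/2}\beta _{n}=n^{1/6}\rightarrow \infty $). The content of Remark 4 is essentially that, since the limiting behavior is governed by local time at the origin and the Lebesgue integral of $f$ is translation-invariant, such a shift is absorbed harmlessly into the same limit. Once Step 1 is in place, Step 2 is a routine conditioning argument.
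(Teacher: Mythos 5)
Your overall route coincides with the paper's: condition on the starting value using the independence of $x_{0}$ and the innovation process, prove the convergence for each deterministic starting value $c$ via Theorem 3 and Remark 4 of Jeganathan (2004), and then integrate the starting value out (your characteristic-function step is equivalent to the paper's dominated-convergence argument on the conditional distribution functions, and that part is fine). The genuine gap is exactly at the point you label ``the main obstacle'' and then settle by assertion. Remark 4 of Jeganathan (2004) is not a statement that a fixed integrable $f$ may be translated; it extends Theorem 3 to a \emph{sequence} of functions $f_{n}$ subject to explicit conditions, and verifying those conditions for the shifted and rescaled functions is the substance of the paper's proof. Concretely, one first has to pass to Jeganathan's normalization $\gamma _{n}^{-1}S_{t}$ with $\gamma _{n}=n^{1/2}h(n)\phi _{0}^{-1}\sigma $ (a bookkeeping step you omit, and which is where the constant $\left\vert \sigma \right\vert ^{-1}$, the sign adjustment $f^{\ast }(y)=f(\mathrm{sign}(\phi _{0}^{-1}\sigma )y)$, the identification $h(n)\rightarrow 2^{-1/2}\left\vert \phi _{0}\right\vert $, and the relation between Jeganathan's local time $\bar{L}(1,0)$ and $L(1,0)$ enter), and then apply Remark 4 to $f_{n}^{\ast }(y)=\bar{\beta}_{n}f^{\ast }(\bar{\beta}_{n}(y+c\gamma _{n}^{-1}))$ with $\bar{\beta}_{n}=n^{-1/2}\left\vert \gamma _{n}\right\vert \beta _{n}$, checking: a uniform bound on $\int \left\vert f_{n}^{\ast }\right\vert $, that $n^{-1}\int \left\vert f_{n}^{\ast }\right\vert ^{2}\rightarrow 0$, the uniform tail condition $\lim_{d\rightarrow \infty }\sup_{n}\int_{\left\vert y\right\vert \geq d}\left\vert f_{n}^{\ast }(y)\right\vert dy=0$, the convergence of the primitives $F_{n}(y)=\int_{0}^{y}f_{n}^{\ast }(u)du$ to a limit $F$ with $\int \bar{L}(1,y)dF(y)=\left( \int f^{\ast }\right) \bar{L}(1,0)$, and $\sup_{n,y}\bar{\beta}_{n}^{-1}\left\vert f_{n}^{\ast }(y)\right\vert <\infty $.

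The heuristic you offer (translation invariance of the Lebesgue integral plus ``local time at the origin'') disposes only of the first of these conditions. The shift enters the tail condition through $c\gamma _{n}^{-1}$, which is controlled because $\gamma _{n}$ grows like $n^{1/2}$ (this is what makes the inclusion $\left\{ \left\vert \bar{\beta}_{n}^{-1}z-c\gamma _{n}^{-1}\right\vert \geq d\right\} \subseteq \left\{ \left\vert z\right\vert \geq d/2\right\} $ available for large $n$), and it enters the primitive $F_{n}$ through the quantity $\bar{\beta}_{n}c\gamma _{n}^{-1}$, which is precisely the possibly divergent $n^{-1/2}\beta _{n}c$ you worry about; here one must argue from the explicit form of $F_{n}$ that the limiting integrator still has all of its mass in a jump of size $\int f^{\ast }$ at zero, so that the limit is $\left( \int f^{\ast }\right) \bar{L}(1,0)$ and the divergence of the shift on the scale of $f$ is harmless. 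None of this verification appears in your proposal; as written, Step 1 rests on an unproved claim about what Remark 4 ``essentially'' says, which is the heart of the matter, while the remaining Step 2 matches the paper.
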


\begin{proof}
Without loss of generality we may assume that $\phi _{0}\neq 0$ (otherwise
shift the sequences $\phi _{j}$ and $\varepsilon _{i}$ appropriately). Set $%
\gamma _{n}=n^{1/2}h(n)\phi _{0}^{-1}\sigma $ as in Jeganathan (2004) with
positive $h(n)$, and note that $\gamma _{n}\neq 0$. From Proposition 1 in
Jeganathan (2004) we obtain that $\gamma _{n}^{-1}S_{n}=\gamma
_{n}^{-1}\sum_{t=1}^{n}w_{t}$ converges in distribution to $N(0,2)$. In view
of the central limit theorem for linear processes and the fact that $h(n)$
is positive, we conclude that $h(n)$ converges to $2^{-1/2}\left\vert \phi
_{0}\right\vert $. We also note that $\limfunc{sign}(\gamma _{n})=\limfunc{%
sign}(\phi _{0}^{-1}\sigma )$ is independent of $n$. Observe that%
\begin{equation}
n^{-1}\sum_{t=1}^{n}\beta _{n}f(n^{-1/2}\beta _{n}x_{t})=\left(
n^{1/2}/\left\vert \gamma _{n}\right\vert \right) n^{-1}\sum_{t=1}^{n}\bar{%
\beta}_{n}f^{\ast }(\gamma _{n}^{-1}\bar{\beta}_{n}x_{t})  \label{trans}
\end{equation}%
where $\bar{\beta}_{n}=n^{-1/2}\left\vert \gamma _{n}\right\vert \beta _{n}$
satisfies $\bar{\beta}_{n}\rightarrow \infty $ and $n^{-1}\bar{\beta}%
_{n}\rightarrow 0$ and where $f^{\ast }(y)=f(\limfunc{sign}(\phi
_{0}^{-1}\sigma )y)$.

Assume first that $x_{0}\equiv 0$. Then $x_{t}=S_{t}$ and since all
assumptions in Theorem 3(i) (or (ii)) in Jeganathan (2004) are satisfied, we
conclude from that theorem that the above expression converges weakly to $%
2^{1/2}\left\vert \sigma \right\vert ^{-1}\left( \int_{-\infty }^{\infty
}f^{\ast }(y)dy\right) \bar{L}(1,0)$ where $\bar{L}(1,0)$ is the local time
as defined in Jeganathan (2004). Since $\int_{-\infty }^{\infty }f^{\ast
}(y)dy=\int_{-\infty }^{\infty }f(y)dy$ and since $2^{1/2}\bar{L}(1,0)$ has
the same distribution as $L(1,0)$ the result follows in case $x_{0}\equiv 0$.

Next assume that $x_{0}\equiv c$, a constant not necessarily equal to zero.
By (\ref{trans}) it again suffices to show that $n^{-1}\sum_{t=1}^{n}\bar{%
\beta}_{n}f^{\ast }(\gamma _{n}^{-1}\bar{\beta}_{n}x_{t})=n^{-1}%
\sum_{t=1}^{n}f_{n}^{\ast }(\gamma _{n}^{-1}S_{t})$ converges to $\left(
\int_{-\infty }^{\infty }f^{\ast }(y)dy\right) \bar{L}(1,0)$ weakly, where $%
f_{n}^{\ast }(y)=\bar{\beta}_{n}f^{\ast }(\bar{\beta}_{n}(y+c\gamma
_{n}^{-1}))$. But, under the maintained assumptions on $x_{t}$, this follows
from the extension of Theorem 3 discussed in Remark 4 in Jeganathan (2004)
if we can verify the subsequent conditions for $f_{n}^{\ast }$ (we may
assume without loss of generality that $\bar{\beta}_{n}>0$ for all $n$):

(i) By change of variables and the integrability assumption on $f$ we have 
\begin{equation*}
\sup_{n}\int_{-\infty }^{\infty }\left\vert f_{n}^{\ast }(y)\right\vert
dy=\sup_{n}\int_{-\infty }^{\infty }\left\vert \bar{\beta}_{n}f^{\ast }(\bar{%
\beta}_{n}(y+c\gamma _{n}^{-1}))\right\vert dy=\int_{-\infty }^{\infty
}\left\vert f(y)\right\vert dy<\infty .
\end{equation*}

(ii) Correcting a typo in Jeganathan (2004), we have to show that 
\begin{equation*}
\limsup_{n}n^{-1}\int_{-\infty }^{\infty }\left\vert f_{n}^{\ast
}(y)\right\vert ^{2}dy=0.
\end{equation*}%
Note that the left-hand side can be written as%
\begin{equation*}
\limsup_{n}n^{-1}\int_{-\infty }^{\infty }\left\vert \bar{\beta}_{n}f^{\ast
}(\bar{\beta}_{n}(y+c\gamma _{n}^{-1}))\right\vert ^{2}dy=\limsup_{n}n^{-1}%
\bar{\beta}_{n}\int_{-\infty }^{\infty }\left\vert f(y)\right\vert ^{2}dy
\end{equation*}%
by a change of variables and the definition of $f^{\ast }$. But this is zero
since $n^{-1}\beta _{n}\rightarrow 0$ by assumption and since the integral
is finite ($f$ is quadratically integrable since it is integrable and
bounded).

(iii) Again by a change of variables%
\begin{equation*}
\lim_{d\rightarrow \infty }\sup_{n}\int_{\left\vert y\right\vert \geq
d}\left\vert f_{n}^{\ast }(y)\right\vert dy=\lim_{d\rightarrow \infty
}\sup_{n}\int_{\left\vert \bar{\beta}_{n}^{-1}z-c\gamma _{n}^{-1}\right\vert
\geq d}\left\vert f^{\ast }(z)\right\vert dz.
\end{equation*}%
Since $f$ is integrable, the limit for $d\rightarrow \infty $ is zero for
each integral individually. Hence, it suffices to show that 
\begin{equation*}
\lim_{d\rightarrow \infty }\sup_{n\geq N}\int_{\left\vert y\right\vert \geq
d}\left\vert f_{n}^{\ast }(y)\right\vert dy=\lim_{d\rightarrow \infty
}\sup_{n\geq N}\int_{\left\vert \bar{\beta}_{n}^{-1}z-c\gamma
_{n}^{-1}\right\vert \geq d}\left\vert f^{\ast }(z)\right\vert dz=0
\end{equation*}%
for a suitable $N$. Choose $N$ such that $\bar{\beta}_{n}>1$ and $\left\vert
c\gamma _{n}^{-1}\right\vert \leq 1$ holds for $n\geq N$. Then we have for $%
d>2$%
\begin{equation}
\sup_{n\geq N}\int_{\left\vert \bar{\beta}_{n}^{-1}z-c\gamma
_{n}^{-1}\right\vert \geq d}\left\vert f^{\ast }(z)\right\vert dz\leq
\int_{\left\vert z\right\vert \geq d/2}\left\vert f^{\ast }(z)\right\vert
dz=\int_{\left\vert z\right\vert \geq d/2}\left\vert f(z)\right\vert dz
\label{up}
\end{equation}%
since 
\begin{equation*}
\left\{ z:\left\vert \bar{\beta}_{n}^{-1}z-c\gamma _{n}^{-1}\right\vert \geq
d\right\} \subseteq \left\{ z:\left\vert z\right\vert \geq d/2\right\}
\end{equation*}%
for $n\geq N$ and $d>2$. The upper bound in (\ref{up}) now converges to zero
for $d\rightarrow \infty $ by integrability of $f$.

(iv) Define $F_{n}(y)$ as in Remark 4 in Jeganathan (2004). Then for $y\geq
0 $ we obtain%
\begin{equation*}
F_{n}(y)=\int_{0}^{y}\bar{\beta}_{n}f^{\ast }(\bar{\beta}_{n}(u+c\gamma
_{n}^{-1}))du=\int_{\bar{\beta}_{n}c\gamma _{n}^{-1}}^{\bar{\beta}%
_{n}(y+c\gamma _{n}^{-1})}f^{\ast }(z)dz,
\end{equation*}%
whereas for $y<0$ we obtain%
\begin{equation*}
F_{n}(y)=-\int_{y}^{0}\bar{\beta}_{n}f^{\ast }(\bar{\beta}_{n}(u+c\gamma
_{n}^{-1}))du=-\int_{\bar{\beta}_{n}(y+c\gamma _{n}^{-1})}^{\bar{\beta}%
_{n}c\gamma _{n}^{-1}}f^{\ast }(z)dz.
\end{equation*}%
It follows that 
\begin{equation*}
F_{n}(y)\rightarrow F(y)=\left\{ 
\begin{array}{cc}
\int_{0}^{\infty }f^{\ast }(z)dz & \text{if }y>0 \\ 
0 & \text{if }y=0 \\ 
-\int_{-\infty }^{0}f^{\ast }(z)dz & \text{if }y<0%
\end{array}%
\right.
\end{equation*}%
for every $y$. Observe that consequently $\int_{-\infty }^{\infty }\bar{L}%
(1,y)dF(y)=\left( \int_{-\infty }^{\infty }f^{\ast }(y)dy\right) \bar{L}%
(1,0) $.

(v) $\sup_{n,y}\bar{\beta}_{n}^{-1}\left\vert f_{n}^{\ast }(y)\right\vert
=\sup_{n,y}\left\vert f^{\ast }(\bar{\beta}_{n}(y+c\gamma
_{n}^{-1}))\right\vert =\sup_{y}\left\vert f^{\ast }(y)\right\vert
=\sup_{y}\left\vert f(y)\right\vert <\infty $ since $f$ is a bounded
function.

This proves that (\ref{jegan}) holds for arbitrary \emph{nonrandom} starting
values. If the starting value $x_{0}$ is random, we proceed as follows:%
\begin{eqnarray*}
&&\Pr \left( n^{-1}\sum_{t=1}^{n}\beta _{n}f(n^{-1/2}\beta _{n}x_{t})\leq
u\right) \\
&=&\int \Pr \left( n^{-1}\sum_{t=1}^{n}\beta _{n}f(n^{-1/2}\beta
_{n}x_{t})\leq u\mid x_{0}=c\right) dG(c) \\
&=&\int \Pr \left( n^{-1}\sum_{t=1}^{n}\beta _{n}f(n^{-1/2}\beta
_{n}(S_{t}+c)\leq u\right) dG(c)
\end{eqnarray*}%
where we have made use of independence of $x_{0}$ and $(S_{1},\ldots ,S_{n})$
and where $G$ denotes the distribution function of $x_{0}$. By what was
shown above, we have that $\Pr \left( n^{-1}\sum_{t=1}^{n}\beta
_{n}f(n^{-1/2}\beta _{n}(S_{t}+c)\leq u\right) $ converges to the
distribution function $\Pr \left( \left( \int_{-\infty }^{\infty
}f(y)dy\right) L(1,0)\leq u\right) $ for all continuity points of this
distribution function. Since this distribution function does not depend on $%
c $, we can conclude from dominated convergence that%
\begin{equation*}
\Pr \left( n^{-1}\sum_{t=1}^{n}\beta _{n}f(n^{-1/2}\beta _{n}x_{t})\leq
u\right) \rightarrow \Pr \left( \left( \int_{-\infty }^{\infty
}f(y)dy\right) L(1,0)\leq u\right)
\end{equation*}%
for all continuity points. This completes the proof.
\end{proof}

\begin{lemma}
\label{lem}Suppose $Y_{n}$ is a sequence of (real-valued or extended
real-valued) nonnegative random variables. Then the following are equivalent:

(i) $\eta _{n}Y_{n}\rightarrow \infty $ in probability as $n\rightarrow
\infty $ for \emph{every }sequence $\eta _{n}$ of real numbers satisfying $%
\eta _{n}\rightarrow \infty $.

(ii) $\lim_{\varepsilon \rightarrow 0,\varepsilon >0}\liminf_{n\rightarrow
\infty }\Pr \left( Y_{n}>\varepsilon \right) =1$.

(iii) $\liminf_{n\rightarrow \infty }\Pr \left( Y_{n}>\varepsilon
_{n}\right) =1$ for \emph{every }sequence of real numbers $\varepsilon
_{n}>0 $ satisfying $\varepsilon _{n}\rightarrow 0$.
\end{lemma}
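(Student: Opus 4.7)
The plan is to establish the equivalences via the cycle $(\mathrm{ii}) \Rightarrow (\mathrm{iii}) \Rightarrow (\mathrm{i}) \Rightarrow (\mathrm{ii})$. The first two implications are routine; the last is the only real obstacle and requires a small diagonal construction.

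For $(\mathrm{ii}) \Rightarrow (\mathrm{iii})$, I use the elementary fact that for any $\varepsilon_n \to 0^+$ and any fixed $\varepsilon > 0$, one has $\varepsilon_n < \varepsilon$ eventually, whence $\Pr(Y_n > \varepsilon_n) \geq \Pr(Y_n > \varepsilon)$; therefore $\liminf_n \Pr(Y_n > \varepsilon_n) \geq \liminf_n \Pr(Y_n > \varepsilon)$, and (ii) lets me make the right-hand side arbitrarily close to $1$ by choice of $\varepsilon$. For $(\mathrm{iii}) \Rightarrow (\mathrm{i})$, I take an arbitrary $\eta_n \to \infty$ and an arbitrary $M > 0$, set $\varepsilon_n = M/\eta_n$ (which is positive for large $n$ and tends to $0$; the finitely many remaining indices can be assigned any positive value), and observe that $\Pr(\eta_n Y_n > M) = \Pr(Y_n > \varepsilon_n)$, so (iii) immediately yields $\Pr(\eta_n Y_n > M) \to 1$ for each $M$.

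The main step, and the one I expect to be the only subtle point, is $(\mathrm{i}) \Rightarrow (\mathrm{ii})$, which I will prove by contraposition. First I note that the map $\varepsilon \mapsto \liminf_n \Pr(Y_n > \varepsilon)$ is non-decreasing as $\varepsilon$ decreases, so the limit in (ii) exists; call it $L \leq 1$. If (ii) fails, then $L < 1$, and in particular $\liminf_n \Pr(Y_n > 1/k) \leq L$ for every $k \in \mathbb{N}$. By the definition of $\liminf$, for each $k$ there are infinitely many indices $n$ with $\Pr(Y_n > 1/k) \leq L + 1/k$, so I can extract a strictly increasing sequence $n_k$ satisfying $\Pr(Y_{n_k} > 1/k) \leq L + 1/k$. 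Defining $\eta_{n_k} := k$, and extending $\eta_n$ on the remaining indices to any values making the full sequence tend to infinity (for instance $\eta_n := k$ for $n_k \leq n < n_{k+1}$), I obtain $\Pr(\eta_{n_k} Y_{n_k} > 1) = \Pr(Y_{n_k} > 1/k) \leq L + 1/k$, and hence $\liminf_n \Pr(\eta_n Y_n > 1) \leq L < 1$, contradicting (i). The only technicality is arranging the $n_k$ to be strictly increasing while still meeting the probability bound, which is immediate from the definition of $\liminf$.
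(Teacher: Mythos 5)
Your proof is correct and follows essentially the same route as the paper's: the $(\mathrm{ii})\Rightarrow(\mathrm{iii})$ and $(\mathrm{iii})\Rightarrow(\mathrm{i})$ steps coincide with the paper's arguments, and your contrapositive diagonal construction for $(\mathrm{i})\Rightarrow(\mathrm{ii})$ (choosing $n_{k}$ with $\Pr(Y_{n_{k}}>1/k)$ bounded away from $1$ and interpolating a piecewise-constant sequence) is the same device the paper uses for its $(\mathrm{iii})\Rightarrow(\mathrm{ii})$ step, merely phrased in terms of $\eta_{n}=1/\varepsilon_{n}$. The only difference is organizational---you close a three-step cycle where the paper proves $(\mathrm{i})\Leftrightarrow(\mathrm{iii})$ and $(\mathrm{ii})\Leftrightarrow(\mathrm{iii})$ separately---which changes nothing of substance.
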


\begin{proof}
We first show that (i) implies (iii): For given $\varepsilon _{n}>0$
satisfying $\varepsilon _{n}\rightarrow 0$ define $\eta _{n}=\varepsilon
_{n}^{-1}$. Clearly then $\eta _{n}\rightarrow \infty $ holds. From (i) we
then have that $\Pr \left( \eta _{n}Y_{n}>1\right) \rightarrow 1$ as $%
n\rightarrow \infty $. But this immediately translates into (iii).

Next we show that (iii) implies (i): Let $\eta _{n}\rightarrow \infty $ be a
given sequence and let $0<M<\infty $ be arbitrary. Define $\varepsilon
_{n}=M/\eta _{n}$ which is well-defined and positive for sufficiently large $%
n$ and satisfies $\varepsilon _{n}\rightarrow 0$. But then 
\begin{equation*}
1=\liminf_{n\rightarrow \infty }\Pr \left( Y_{n}>\varepsilon _{n}\right)
=\liminf_{n\rightarrow \infty }\Pr \left( \eta _{n}Y_{n}>M\right)
\end{equation*}%
holds as a consequence of (iii). Since $M$ was arbitrary, (i) follows.

That (ii) implies (iii) is obvious since for every $\varepsilon >0$ we have $%
\Pr \left( Y_{n}>\varepsilon _{n}\right) \geq \Pr \left( Y_{n}>\varepsilon
\right) $ for large $n$ since $\varepsilon _{n}\rightarrow 0$.

We finally show that (iii) implies (ii): Suppose (ii) does not hold. Then 
\begin{equation*}
\lim_{\varepsilon \rightarrow 0,\varepsilon >0}\liminf_{n\rightarrow \infty
}\Pr \left( Y_{n}>\varepsilon \right) <1
\end{equation*}%
must hold, noting that the outer limit exists due to monotonicity with
respect to $\varepsilon $. In particular,%
\begin{equation*}
\lim_{k\rightarrow \infty }\liminf_{n\rightarrow \infty }\Pr \left(
Y_{n}>1/k\right) <1
\end{equation*}%
must hold. Hence we can find a strictly increasing sequence $n_{k}$ of
integers diverging to infinity and a constant $c<1$ such that%
\begin{equation*}
\Pr \left( Y_{n_{k}}>1/k\right) <c<1
\end{equation*}%
holds for every $k\geq k_{0}$ for some sufficiently large $k_{0}$. For $%
n\geq n_{k_{0}}$ define $\varepsilon _{n}=1/k$ if $n_{k}\leq n<n_{k+1}$, and
set $\varepsilon _{n}=1$ for $n<n_{k_{0}}$. Then $\varepsilon _{n}>0$ and $%
\varepsilon _{n}\rightarrow 0$ for $n\rightarrow \infty $ holds. But 
\begin{equation*}
\liminf_{n\rightarrow \infty }\Pr \left( Y_{n}>\varepsilon _{n}\right) \leq
\liminf_{k\rightarrow \infty }\Pr \left( Y_{n_{k}}>\varepsilon
_{n_{k}}\right) \leq \liminf_{k\rightarrow \infty }\Pr \left(
Y_{n_{k}}>1/k\right) \leq c<1\text{,}
\end{equation*}%
showing that (iii) does not hold.
\end{proof}

\end{document}